\documentclass[reqno]{amsart}

\synctex=1

\usepackage{amsmath,amssymb,amsthm,amsfonts,enumerate, enumitem,hyperref,cleveref}
\usepackage{dsfont}
\usepackage[all]{xy}
\usepackage[T1]{fontenc}
\usepackage{tikz}
\usepackage{pgfplots}
\pgfplotsset{compat=1.15}
\usepackage{mathrsfs}
\usetikzlibrary{arrows}

\DeclareMathOperator{\Aut}{Aut}

\theoremstyle{plain}

\newtheorem{thrm}{Theorem}[section]
\newtheorem{cor}[thrm]{Corollary}
\newtheorem{prop}[thrm]{Proposition}
\newtheorem{lem}[thrm]{Lemma}

\theoremstyle{definition}

\crefrangeformat{equation}{#3(#1)#4--#5(#2)#6}

\crefname{thrm}{Theorem}{Theorems}
\crefname{theorem}{Theorem}{Theorems}
\crefname{lem}{Lemma}{Lemmas}
\crefname{cor}{Corollary}{Corollaries}
\crefname{prop}{Proposition}{Propositions}
\crefname{defn}{Definition}{Definitions}
\crefname{exm}{Example}{Examples}
\crefname{rem}{Remark}{Remarks}
\crefname{conj}{Conjecture}{Conjectures}
\crefname{quest}{Question}{Questions}
\crefname{section}{Section}{Sections}
\crefname{equation}{\unskip}{\unskip}
\crefname{enumi}{\unskip}{\unskip}
\crefname{subsection}{Subsection}{Subsections}

\newcommand{\e}{\epsilon}

\newcommand{\af}{\alpha}
\newcommand{\bt}{\beta}
\newcommand{\lb}{\lambda}

\newcommand{\gm}{\gamma}
\newcommand{\vf}{\varphi}

\newcommand{\dl}{\delta}

\newcommand{\sg}{\sigma}

\newcommand{\B}{\mathcal{B}}

\newcommand{\m}{{}^{-1}}
\newcommand{\sst}{\subseteq}
\newcommand{\sm}{\setminus}
\newcommand{\impl}{\Rightarrow}

\newcommand{\wht}{\widehat}

\newcommand{\id}{\mathrm{id}}
\renewcommand{\iff}{\Leftrightarrow}

\begin{document}
	\title[Preservers of products equal to primitive idempotents of $I(X,F)$]{Linear maps preserving products equal to primitive idempotents of an incidence algebra}	
	
	\author{Jorge J. Garc{\' e}s}
	\address{Departamento de Matem{\' a}tica Aplicada a la Ingenier{\' i}a Industrial, ETSIDI, Universidad Polit{\' e}cnica de Madrid, Madrid, Spain}
	\email{j.garces@upm.es}
	
	\author{Mykola Khrypchenko}
	\address{Departamento de Matem\'atica, Universidade Federal de Santa Catarina,  Campus Reitor Jo\~ao David Ferreira Lima, Florian\'opolis, SC, CEP: 88040--900, Brazil}
	\email{nskhripchenko@gmail.com}

	\subjclass[2010]{ Primary: 16S50, 15A86; secondary: 17C27}
	\keywords{Incidence algebra; product preserver; primitive idempotent}
	
	\begin{abstract}
		Let $A$, $B$ be algebras and $a\in A$, $b\in B$ a fixed pair of elements. We say that a map $\vf:A\to B$ \textit{preserves products equal to $a$ and $b$} if for all $a_1,a_2\in A$ the equality
		$a_1a_2=a$ implies $\vf(a_1)\vf(a_2)=b$. In this paper we study bijective linear maps $\vf:I(X,F)\to I(X,F)$ preserving products equal to primitive idempotents of $I(X,F)$, where $I(X,F)$ is the incidence algebra of a finite connected poset $X$ over a field $F$. We fully characterize the situation, when such a map $\vf$ exists, and whenever it does, $\vf$ is either an automorphism of $I(X,F)$ or the negative of an automorphism of $I(X,F)$.
	\end{abstract}
	
	\maketitle
	
	\tableofcontents
	
	\section*{Introduction}
	
	There are several ways to generalize the notion of a homomorphism $\vf:A\to B$ between two algebras. One of them deals with a generalization of the equality
	\begin{align}\label{vf(a_1a_2)=vf(a_1)vf(a_2)}
		\vf(a_1a_2)=\vf(a_1)\vf(a_2),
	\end{align}
	leading, for instance, to \textit{Jordan} and \textit{Lie homomorphisms}. The other one consists in a restriction of the set of pairs $(a_1,a_2)$ for which \cref{vf(a_1a_2)=vf(a_1)vf(a_2)} holds. Perhaps, the most known generalizations in this direction are \textit{zero product preservers} ($a_1a_2=0$), \textit{orthogonality preservers} ($a_1a_2=a_2a_1=0$) and \textit{idempotent preservers} ($a_1=a_2\in E(A)$). Inspired by the notion of a zero product preserver, Chebotar, Ke, Lee and Shiao~\cite{Chebotar-Ke-Lee-Shiao05} proposed to study maps that \textit{preserve ``constant products''}, i.e. satisfying
	\begin{align}\label{a_1a_2=a_3a_4=>vf(a_1)vf(a_2)=vf(a_3)vf(a_4)}
		a_1a_2=a_3a_4\impl\vf(a_1)\vf(a_2)=\vf(a_3)\vf(a_4).
	\end{align}
	A variant of this property is the following:
	\begin{align}\label{a_1a_2=a=>vf(a_1)vf(a_2)=b}
		a_1a_2=a\impl\vf(a_1)\vf(a_2)=b,
	\end{align}
	where $a\in A$ and $b\in B$ are fixed. In this case we say that $\vf$ \textit{preserves products equal to $a$ and $b$}. If $a$ and $b$ are zero, we get a zero product preserver. If $a$ and $b$ are the identity elements, we get a map preserving the inverses. 
	
	Chebotar, Ke, Lee and Shiao~\cite{Chebotar-Ke-Lee-Shiao05} proved that a bijective additive map on a division ring satisfying \cref{a_1a_2=a_3a_4=>vf(a_1)vf(a_2)=vf(a_3)vf(a_4)} with $a_1a_2=a_3a_4$ being the identity element is either an automorphism multiplied by a central element or an anti-automorphism multiplied by a central element. This generalizes an old result~\cite[Theorem 1.15]{Artin57} on inverse preservers of division rings. Catalano~\cite{Catalano18} extended the above description by Che\-bo\-tar, Ke, Lee and Shiao to the case, where $a_1a_2$ is an arbitrary invertible element of a division ring. Later, this was generalized in~\cite{Catalano-Hsu-Kapalko19} to maps on the ring $M_n(D)$ of matrices (with usual or Jordan product) over a division ring $D$. The next natural question was to study maps that preserve products equal to non-invertible elements of $M_n(D)$. The case, where such products are rank-one idempotents of $M_n(\mathbb{C})$, was treated by Catalano in~\cite{Catalano21}, and the corresponding maps were proved to be $\pm$~homomorphisms. Bijective maps on $M_n(\mathbb{C})$ preserving products equal to nilpotent matrices were later characterized in~\cite{Catalano-Chang-Lee21} by Catalano and Chang-Lee. Catalano and Julius~\cite{Catalano-Julius21a} solved the problem, where the products are diagonalizable matrices over $\mathbb C$ with the same eigenvalues. Perhaps, the most general situation was considered in~\cite{Catalano-Julius21b}: one of the products is a matrix from $M_n(\mathbb{C})$ of rank at most $n-2$, or both products have the same rank $n-1$. The corresponding product preservers are again of the standard form, i.e. automorphisms multiplied by scalars. When $M_n(\mathbb{C})$ is equipped with the Lie bracket $[a,b]=ab-ba$, the description slightly differs, but still reminds the description of commutativity preservers (i.e. zero product preservers with respect to $[-,-]$), see \cite{Ginsburg-Julius-Velasquez20,Julius21}. In the setting of C$^*$-algebras the problem of studying linear mappings that are $^*$-homomorphisms at a fixed point has been considered in \cite{Burgos-Cabello-Peralta19}.
	
	Notice that the incidence algebra $I(X,F)$ of a finite poset $X$ over a field $F$ can be seen as a subalgebra of the algebra $T_n(F)$ of upper triangular matrices, where $n=|X|$. The primitive idempotents of $I(X,F)$ (which correspond to rank-one idempotent matrices from $T_n(F)$) were described in~\cite{Khripchenko-Novikov09} and the automorphisms of $I(X,F)$ have been studied by various authors (see, for example, \cite{St,Baclawski72,Drozd-Kolesnik07,Kh-aut}), so it is natural to ask whether the Catalano's result~\cite{Catalano21} holds for maps on incidence algebras. Under the assumption that $X$ is connected, we show in \cref{main-result} that a bijective linear map $\vf:I(X,F)\to I(X,F)$ preserving the products equal to fixed primitive idempotents does not always exist, and its existence depends on the existence of an automorphism $\lambda$ of $X$ mapping one fixed element $x\in X$ to another fixed element $y\in X$. If such $\lb\in\Aut(X)$ exists, then $\vf$ is either an automorphism of $I(X,F)$ or the negative of an automorphism of $I(X,F)$. Our result can be applied, in particular, to the upper triangular matrix algebra $T_n(F)$ (see \cref{result-for-T_n(K)}).
	
	\section{Preliminaries}\label{sec-prelim}
	
	\subsection{Rings and algebras}
	An element $a$ of a ring $R$ satisfying $a^2=a$ is called an {\it idempotent}. The set of idempotents of $R$ will be denoted by $E(R)$.
	
	Two elements $a,b\in R$ are {\it orthogonal} if $ab=ba=0$. An idempotent $e\in E(R)$ is {\it primitive} if $e=e_1+e_2$ with $e_1,e_2\in E(R)$ orthogonal implies $e_1,e_2\in\{0,e\}$.

	\subsection{Posets}
	
	A \textit{poset} is a pair $(X,\le)$, where $X$ is a set and $\le$ is a partial order (i.e., a reflexive, transitive and anti-symmetric binary relation) on $X$. All the posets in this article will be finite. The \emph{interval} from $x$ to $y$ in a poset $X$ is the subset $[x,y]:=\{z\in X \mid x\le z\le y\}$. A \textit{chain} in $X$ is a non-empty subset $C\sst X$ such that for all $x,y\in C$ either $x\le y$ or $y\le x$. The \textit{length} of a chain $C\sst X$ is defined to be $|C|-1$. The \textit{length} of a non-empty poset $X$ is $l(X):=\max\{l(C)\mid C\text{ is a chain in }X\}$. We write $l(x,y)$ for $l([x,y])$. A {\it walk} in $X$ is a sequence $x_0,x_1,\dots,x_m$ of elements of $X$, such that for all $i=0,\dots,m-1$ either $x_i\le x_{i+1}$ and $l( x_i,x_{i+1})=1$ or $x_{i+1}\le x_i$ and $l(x_{i+1},x_i)=1$. A poset $X$ is {\it connected} if for all $x,y\in X$ there exists a walk $x=x_0,\dots,x_m=y$. 
	
	Given a subset $A\sst X^2$, denote by $A_\le$ (resp. $A_<$) the set $\{(x,y)\in A\mid x\le y\}$  (resp. $\{(x,y)\in A\mid x<y\}$).
	
	A map $\lb:X\to Y$ between two posets is {\it order-preserving} whenever $x\le y\impl\lb(x)\le\lb(y)$ for all $x,y\in X$. An {\it (order) automorphism} of a finite poset $X$ is an order-preserving bijection of $X$.\footnote{For an infinite $X$ one should also require that the inverse bijection be order-preserving, which holds automatically in the finite case.}
	
	\subsection{Incidence algebras}
	
	The \emph{incidence algebra} $I(X,F)$ of a finite poset $X$ over a field $F$ is the $F$-vector space with basis $\{e_{xy}\mid x\leq y\}$ (called \textit{the standard basis}) and multiplication 
	$$
	e_{xy}e_{uv}=
	\begin{cases}
		e_{xv}, & y=u,\\
		0, & y\ne u.
	\end{cases} 
	$$
	For an element $f\in I(X,F)$ we write $f=\sum_{x\le y}f(x,y)e_{xy}$, so that 
	$$
	(fg)(x,y)=\sum_{x\le z\le y}f(x,z)g(z,y). 
	$$
	The algebra $I(X,F)$ is associative and unital, where the identity element of $I(X,F)$ is $\delta:=\sum_{x\in X} e_{xx}$.
	
	Given $f\in I(X,F)$, define $f_D=\sum_{x\in X}f(x,x)e_{xx}$ and $f_U=\sum_{x<y}f(x,y)e_{xy}$, so that $f=f_D+f_U$. An element $f\in I(X,F)$ is said to be \textit{diagonal} whenever $f=f_D$. The diagonal elements form a commutative subalgebra of $I(X,F)$, denoted by $D(X,F)$. The elements $f$ with $f=f_U$ (equivalently, $f_D=0$) form an ideal which coincides with the {\it Jacobson radical} of $I(X,F)$, denoted $J(I(X,F))$ (see \cite[Theorem 4.2.5]{SpDo}). The set $\B:=\{e_{xy}\mid x<y\}$ is clearly a basis of $J(I(X,F))$. Thus, $I(X,F)=D(X,F)\oplus J(I(X,F))$ as vector spaces. Invertible elements of $I(X,F)$ are exactly those $f\in I(X,F)$ such that $f(x,x)\in F^*$ for all $x\in X$ by \cite[Theorem 1.2.3]{SpDo}. The center of $I(X,F)$ coincides with $\{r\dl\mid r\in F\}$ by \cite[Corollary 1.3.15]{SpDo}, provided that $X$ is connected. For any $Y\sst X$ the incidence algebra $I(Y,F)$ can be seen as a subspace of $I(X,F)$ closed under multiplication, where $Y$ is assumed to be endowed with the induced partial order. Observe that we avoid using the term ``subalgebra'', because the identity element of $I(Y,F)$ is different from that of $I(X,F)$ whenever $Y\ne X$.
	
	We will write $e_{x}:=e_{xx}$ and, more generally, $e_A:=\sum_{a\in A}e_{aa}$ for $A\sst X$. Then $\{e_x\mid x\in X\}$ is a set of pairwise orthogonal primitive idempotents of $I(X,F)$ which form a basis of $D(X,F)$. Observe that $I(X,F)$ also admits a basis formed by the idempotents $\{e_x\}_{x\in X}\sqcup\{e_x+e_{xy}\}_{x<y}$.
	
		Recall from~\cite{St,Baclawski72,Kh-aut} the description of automorphisms of $I(X,F)$. Any order automorphism $\lb$ of $X$ \textit{induces} an automorphism $\widehat\lb$ of $I(X,F)$ by means of 
		\begin{align}\label{hat-lb(e_xy)=e_lb(x)lb(y)}
			\widehat\lb(e_{xy})=e_{\lb(x)\lb(y)},
		\end{align}
		where $x\le y$. Moreover, a map $\sg:X_\le^2\to F^*$ with 
		\begin{align}\label{sg(x_y)sg(y_z)=sg(x_z)}
			\sg(x,y)\sg(y,z)=\sg(x,z)
		\end{align}
		for all $x\le y\le z$ defines the \textit{multiplicative automorphism} $M_\sg$ of $I(X,F)$ acting as follows:
		\begin{align}\label{M_sg(e_xy)=sg(x_y)e_xy}
			M_\sg(e_{xy})=\sg(x,y)e_{xy},
		\end{align}
		where $x\le y$. Any automorphism of $I(X,F)$ is the composition of an inner auto\-mor\-phism of $I(X,F)$ and automorphisms $\widehat\lb$ and $M_\sg$ described in \cref{hat-lb(e_xy)=e_lb(x)lb(y),M_sg(e_xy)=sg(x_y)e_xy}.

	\section{Zero product preservers of $I(X,F)$}
	
	Recall that a map $\vf:A\to B$ between two algebras \textit{preserves zero products} (or is a \textit{zero product preserver}) if $\vf(a)\vf(b)=0$ in $B$ whenever $ab=0$ in $A$. It turns out that the description of zero product preservers of $I(X,F)$ by their action on the natural basis will be useful in the following section.
	
	\begin{prop}\label{zp-on-the-basis}
		Let $X$ be finite and $\vf:I(X,F)\to A$ a linear map, where $A$ is an algebra. Then $\vf$ is a zero product preserver if and only if
		\begin{align}
			\vf(e_{xy})\vf(e_{uv})&=0\text{ for all }x\le y\text{ and }u\le v\text{ with }e_{xy}e_{uv}=0,\label{vf-zp-on-the-basis}\\
			\vf(e_x)\vf(e_{xy})&=\vf(e_{xz})\vf(e_{zy})=\vf(e_{xy})\vf(e_y)\text{ for all }x<z<y.\label{vf(e_xz)_vf(e_zy)=const}
		\end{align}
	\end{prop}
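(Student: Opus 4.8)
The plan is to prove the two implications separately, with the forward direction (necessity of the displayed conditions) being routine and the converse carrying the actual content.

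For necessity, assume $\vf$ is a zero product preserver. Then \eqref{vf-zp-on-the-basis} is immediate, since $e_{xy}e_{uv}=0$ in $I(X,F)$ forces $\vf(e_{xy})\vf(e_{uv})=0$ by definition. To obtain \eqref{vf(e_xz)_vf(e_zy)=const}, the idea is to apply $\vf$ to elements whose product vanishes but whose expansion isolates the quantities to be compared. Fixing $x<z<y$, a check of the multiplication table gives $(e_x-e_{xz})(e_{xy}+e_{zy})=0$ and $(e_x-e_{xy})(e_{xy}+e_y)=0$, the cross terms $e_xe_{zy}$, $e_{xz}e_{xy}$, $e_xe_y$ and $e_{xy}e_{xy}$ all being zero. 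Applying $\vf$, expanding by linearity, and discarding the images of these vanishing basis products (which are $0$ by \eqref{vf-zp-on-the-basis}) yields $\vf(e_x)\vf(e_{xy})=\vf(e_{xz})\vf(e_{zy})$ and $\vf(e_x)\vf(e_{xy})=\vf(e_{xy})\vf(e_y)$; chaining these is exactly \eqref{vf(e_xz)_vf(e_zy)=const}.

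For the converse, assume both conditions and take $f=\sum_{x\le y}f(x,y)e_{xy}$ and $g=\sum_{u\le v}g(u,v)e_{uv}$ with $fg=0$. Expanding $\vf(f)\vf(g)$ bilinearly and using \eqref{vf-zp-on-the-basis} to kill every term with $y\ne u$, one is left with $\vf(f)\vf(g)=\sum_{x\le y\le v}f(x,y)g(y,v)\,\vf(e_{xy})\vf(e_{yv})$. The crucial point is that \eqref{vf(e_xz)_vf(e_zy)=const} says precisely that, for each fixed pair $x<v$, the element $\vf(e_{xy})\vf(e_{yv})$ is one and the same $c_{xv}$ for all $y$ in the interval $[x,v]$, the endpoints $y=x$ and $y=v$ corresponding to $\vf(e_x)\vf(e_{xv})$ and $\vf(e_{xv})\vf(e_v)$. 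Hence the inner sum over $y$ factors as $c_{xv}\sum_{x\le y\le v}f(x,y)g(y,v)=c_{xv}\,(fg)(x,v)=0$; the diagonal pairs $x=v$ contribute $(fg)(x,x)\,\vf(e_x)\vf(e_x)=0$ as single terms, and summing over all $(x,v)$ gives $\vf(f)\vf(g)=0$.

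I expect the converse to be the main obstacle, and within it the key realization that \eqref{vf(e_xz)_vf(e_zy)=const} encodes the statement that $\vf(e_{xy})\vf(e_{yv})$ depends only on the endpoints $x,v$ and not on the intermediate index $y$; this is what lets the surviving coefficient collapse into the entry $(fg)(x,v)$ of the vanishing product $fg$. The only care required is to subsume the endpoint indices $y\in\{x,v\}$ (using $e_{xx}=e_x$ and $e_{vv}=e_v$) into the same common value and to handle the diagonal pairs $x=v$ separately, since there the inner sum reduces to a single term.
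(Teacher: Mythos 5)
Your proposal is correct and follows essentially the same route as the paper: for the forward direction the paper uses the single family of zero products $(e_x+e_{xz})(e_{zy}-e_{xy})=0$ for $x<z\le y$, while you use the two equivalent products $(e_x-e_{xz})(e_{xy}+e_{zy})=0$ and $(e_x-e_{xy})(e_{xy}+e_y)=0$, a purely cosmetic difference. Your converse---using \cref{vf-zp-on-the-basis} to kill the cross terms and then \cref{vf(e_xz)_vf(e_zy)=const} to see that $\vf(e_{xz})\vf(e_{zy})$ depends only on the endpoints, so that the inner sum collapses to the coefficient $(fg)(x,y)$ of the vanishing product---is exactly the paper's computation, including the correct treatment of the endpoint and diagonal cases.
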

	\begin{proof}
		Let $\vf$ be a zero product preserver. Then \cref{vf-zp-on-the-basis} is immediate. Since $(e_x+e_{xz})(e_{zy}-e_{xy})=0$ for all $x<z\le y$, then using \cref{vf-zp-on-the-basis} we have
		\begin{align*}
			0&=\vf(e_x)\vf(e_{zy})-\vf(e_x)\vf(e_{xy})+\vf(e_{xz})\vf(e_{zy})-\vf(e_{xz})\vf(e_{xy})\\
			&=\vf(e_{xz})\vf(e_{zy})-\vf(e_x)\vf(e_{xy}),
		\end{align*}
		proving \cref{vf(e_xz)_vf(e_zy)=const}.
		
		Conversely, assume \cref{vf-zp-on-the-basis,vf(e_xz)_vf(e_zy)=const}. Take arbitrary $f,g\in I(X,F)$ and calculate
		\begin{align*}
			\vf(f)\vf(g)&=\sum_{x\le y}\sum_{u\le v}f(x,y)g(u,v)\vf(e_{xy})\vf(e_{uv})
			=\sum_{x\le z\le y}f(x,z)g(z,y)\vf(e_{xz})\vf(e_{zy})\\
			&=\sum_{x\le y}\left(\sum_{x\le z\le y}f(x,z)g(z,y)\right)\vf(e_x)\vf(e_{xy})
			=\sum_{x\le y}(fg)(x,y)\vf(e_x)\vf(e_{xy}).
		\end{align*}
		So, if $fg=0$, then $\vf(f)\vf(g)=0$.
	\end{proof}
	
	\section{Preservers of products equal to primitive idempotents of $I(X,F)$}
	
	% 	Let $\vf:A\to B$ be a linear map between two algebras and $a\in A$, $b\in B$ be a fixed pair of elements. We say that {\it $\vf$ preserves products equal to $a$ and $b$} if for all $a_1,a_2\in A$:
	% 	\begin{align*}
		% 	    a_1a_2=a \impl \vf(a_1)\vf(a_2)=b.
		% 	\end{align*}
	
	Our goal is to study the bijective linear maps $\vf:I(X,F)\to I(X,F)$ which preserve products equal to a fixed pair of primitive idempotents $\epsilon,\eta\in I(X,F)$, i.e. satisfying \cref{a_1a_2=a=>vf(a_1)vf(a_2)=b} with $a=\epsilon$ and $b=\eta$. By~\cite[Lemma 1]{Khripchenko-Novikov09} there exist $x,y\in X$ and inner automorphisms $\psi_1,\psi_2$ of $I(X,F)$, such that $\psi_1(e_x)=\epsilon$ and $\psi_2(\eta)=e_y$. Then, replacing $\vf$ by $\psi_2\circ\vf\circ\psi_1$, we may assume that $\vf$ satisfies
	\begin{align}\label{fg=e_x=>vf(f)vf(g)=e_y}
		fg=e_x\impl\vf(f)\vf(g)=e_y
	\end{align}
	for all $f,g\in I(X,F)$.
	
	We first list here easy consequences of \cref{fg=e_x=>vf(f)vf(g)=e_y} as in~\cite{Catalano21}.
	\begin{lem}\label{properties-of-vf}%could be generalized to an infinite X, if necessary
		The following equalities hold:
		\begin{enumerate}
			\item $\vf(e_x)^2=e_y$;\label{vf(e_x)^2=e_y}
			\item 
			\begin{enumerate}
				\item $\vf(e_x)\vf(e_{uv})=0$ for all $x\ne u\le v$;\label{vf(e_x)vf(e_uv)=0}
				\item $\vf(e_{uv})\vf(e_x)=0$ for all $u\le v\ne x$;\label{vf(e_uv)vf(e_x)=0} 
			\end{enumerate}
			\item 
			\begin{enumerate}
				\item $\vf(e_z)\vf(e_{ux})=0$ for all $z\ne u\le x$;\label{vf(e_z)vf(e_ux)=0}
				\item $\vf(e_{xv})\vf(e_z)=0$ for all $x\le v\ne z$;\label{vf(e_xv)vf(e_z)=0}
			\end{enumerate}
			\item $\vf(e_u)\vf(e_v)=0$ for all $u\ne v$;\label{vf(e_u)vf(e_v)=0}
			\item 
			\begin{enumerate}
				\item $\vf(e_{ux})\vf(e_x)-\vf(e_u)\vf(e_{ux})=r\vf(e_{ux})^2$ for all $u<x$ and $r\in F^*$; \label{r(vf(e_ux)vf(e_x)-vf(e_u)vf(e_ux))-r^2vf(e_ux)^2=0} %in particular, if $F\ne \Z_2$, then $\vf(e_{ux})\vf(e_x)=\vf(e_u)\vf(e_{ux})$ and $\vf(e_{ux})^2=0$;
				\item $\vf(e_x)\vf(e_{xv})-\vf(e_{xv})\vf(e_v)=r\vf(e_{xv})^2$ for all $v>x$ and $r\in F^*$; \label{r(vf(e_x)vf(e_xv)-vf(e_xv)vf(e_v))-r^2vf(e_xv)^2=0} %in particular, if $F\ne \Z_2$, then $\vf(e_x)\vf(e_{xv})=\vf(e_{xv})\vf(e_v)$ and $\vf(e_{xv})^2=0$;
			\end{enumerate}
			\item 
			\begin{enumerate}
				\item $\vf(e_{ux})\vf(e_z)=0$ for all $u\le x$, $z\not\in\{u,x\}$;\label{vf(e_ux)vf(e_z)=0}
				\item $\vf(e_z)\vf(e_{xv})=0$ for all $x\le v$, $z\not\in\{x,v\}$.\label{vf(e_z)vf(e_xv)=0}
			\end{enumerate}
			
			% 			\item 
			% 			\begin{enumerate}
				% 			    \item $\vf(e_{xu})\vf(e_{xv})=0$ for all $x<u,v$ with $u\ne v$;\label{vf(e_xu)vf(e_xv)=0}
				% 			    \item $\vf(e_{ux})\vf(e_{vx})=0$ for all $u,v<x$ with $u\ne v$;\label{vf(e_ux)vf(e_vx)=0}
				% 			\end{enumerate}
			% 			\item 
			% 			\begin{enumerate}
				% 			    \item $\vf(e_{xu})\vf(e_{uv})=\vf(e_x)\vf(e_{xv})$ for all $x<u<v$;\label{vf(e_xu)vf(e_uv)=vf(e_x)vf(e_xv)}
				% 			    \item $\vf(e_{uv})\vf(e_{vx})=\vf(e_{ux})\vf(e_x)$ for all $u<v<x$.\label{vf(e_uv)vf(e_vx)=vf(e_ux)vf(e_x)}
				% 			\end{enumerate}
			
		\end{enumerate}
	\end{lem}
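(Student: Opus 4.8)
The plan is to treat every item by the same device: exhibit a family of pairs $(f,g)$ in $I(X,F)$ whose product is identically $e_x$, apply the hypothesis \cref{fg=e_x=>vf(f)vf(g)=e_y} to conclude $\vf(f)\vf(g)=e_y$, and then read off the desired relation from the linearity of $\vf$ by comparing coefficients. Two variants are needed. To obtain the vanishing relations I would insert a single scalar $t$ into one factor; if $fg=e_x$ for all $t$, then $\vf(f)\vf(g)$ is a polynomial in $t$ that is constantly $e_y$, so every nonconstant coefficient must vanish. To obtain the ``cross'' vanishing relations I would use two scalars $s,t$, writing $f=e_x+s\alpha$ and $g=e_x+t\beta$ arranged so that $fg\equiv e_x$; then the coefficient of $st$ in $\vf(f)\vf(g)$, which is $\vf(\alpha)\vf(\beta)$, must be zero.

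With this device the early items are routine. Item (i) is immediate from $e_xe_x=e_x$. For (ii)(a) I take $e_x(e_x+te_{uv})=e_x$ — valid since $u\ne x$ forces $e_xe_{uv}=0$ — and subtract the constant $e_y$ supplied by (i); item (ii)(b) is the mirror, from $(e_x+te_{uv})e_x=e_x$, valid because $v\ne x$. For (iii)(a) I set $f=e_x+se_z$, $g=e_x+te_{ux}$: the assumption $z\ne u$, together with the non-degenerate cases $z\ne x$ and $u\ne x$, makes each of $e_ze_x$, $e_xe_{ux}$, $e_ze_{ux}$ vanish, so $fg\equiv e_x$ and the $st$-coefficient gives $\vf(e_z)\vf(e_{ux})=0$. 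Items (iii)(b) and (iv) are analogous, using $f=e_x+se_u$, $g=e_x+te_v$ for the latter. The degenerate index coincidences ($z=x$ or $u=x$, and so on) are not lost: they simply reduce back to (ii).

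The substantive step is (v). Here I would factor $e_x$ in a balanced way that involves $e_u$, $e_x$ and $e_{ux}$ simultaneously, the key being the directly verified identity $(e_u+e_x+re_{ux})(e_x-re_{ux})=e_x$, holding for every $r$. Applying \cref{fg=e_x=>vf(f)vf(g)=e_y} and expanding $\vf(e_u+e_x+re_{ux})\vf(e_x-re_{ux})=e_y$, I then discard the terms already known to vanish: $\vf(e_u)\vf(e_x)=0$ by (iv), $\vf(e_x)\vf(e_{ux})=0$ by (ii)(a), and $\vf(e_x)^2=e_y$ by (i). What survives is $-r\vf(e_u)\vf(e_{ux})+r\vf(e_{ux})\vf(e_x)-r^2\vf(e_{ux})^2=0$, and cancelling $r\in F^*$ yields (v)(a). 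Item (v)(b) is the left--right mirror, coming from $(e_x-re_{xv})(e_x+e_v+re_{xv})=e_x$.

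Finally, for (vi) I would perturb the factorizations just built. Taking $f=e_u+e_x+re_{ux}$ and $g_0=e_x-re_{ux}$ with $fg_0=e_x$, I add $te_z$ to $g_0$; because $z\notin\{u,x\}$ forces $fe_z=0$, the product remains $e_x$ for all $t$, so $\vf(f)\vf(e_z)=0$. Expanding $\vf(f)$ and using (iv) to drop $\vf(e_u)\vf(e_z)$ and $\vf(e_x)\vf(e_z)$ leaves $r\vf(e_{ux})\vf(e_z)=0$, i.e. (vi)(a); item (vi)(b) is again the mirror, perturbing the left factor of (v)(b) by $se_z$. I expect the genuine difficulty to be not any single computation but the bookkeeping: choosing each factorization of $e_x$ so that the unwanted products are exactly those already proved to vanish, keeping the proofs in the order (i)$\to$(ii)$\to$(iii),(iv)$\to$(v)$\to$(vi), and routing every degenerate coincidence of indices back to an earlier item.
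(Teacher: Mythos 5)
Your proof is correct and essentially identical to the paper's: you use the same factorizations of $e_x$ in the same order --- notably $(e_x+e_u+re_{ux})(e_x-re_{ux})=e_x$ for item (v) and its perturbation by a multiple of $e_z$ for item (vi) --- with the same reduction of degenerate index coincidences back to item (ii). Your scalar-insertion framing is a harmless repackaging of the paper's direct expansions, since every coefficient you extract is at most linear in each inserted variable (the quadratic term in $r$ in item (v) is handled, as in the paper, by dividing the surviving identity by a fixed $r\in F^*$), so specializing the parameters to $0$ and $1$ suffices over any field.
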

	\begin{proof}
		\cref{vf(e_x)^2=e_y} follows from $e_x^2=e_x$; \cref{vf(e_x)vf(e_uv)=0} follows from \cref{vf(e_x)^2=e_y} and $e_x(e_x+e_{uv})=e_x$ for $x\ne u\le v$; \cref{vf(e_uv)vf(e_x)=0} follows from \cref{vf(e_x)^2=e_y} and $(e_x+e_{uv})e_x=e_x$ for $u\le v\ne x$. 
		
		It suffices to prove \cref{vf(e_z)vf(e_ux)=0} for $u<x$, since $u=x$ is a particular case of \cref{vf(e_uv)vf(e_x)=0}. So, assume $u<x$ and write $\vf(e_x)\vf(e_{ux})=0$ by \cref{vf(e_x)vf(e_uv)=0}. In particular, this proves \cref{vf(e_z)vf(e_ux)=0} for $z=x$. Thus, take $z\ne x$. Then $(e_x+e_z)(e_x+e_{ux})=e_x$. Combining this with $\vf(e_z)\vf(e_x)=0$ (by \cref{vf(e_uv)vf(e_x)=0}) and \cref{vf(e_x)^2=e_y}, we obtain \cref{vf(e_z)vf(e_ux)=0}. Similarly, \cref{vf(e_xv)vf(e_z)=0} follows from $\vf(e_{xv})\vf(e_x)=0$ for $x<v$, $(e_x+e_{xv})(e_x+e_z)=e_x$ for $z\ne x$ and \cref{vf(e_x)^2=e_y}.
		
		If $x\in\{u,v\}$, then \cref{vf(e_u)vf(e_v)=0} is a particular case of \cref{vf(e_x)vf(e_uv)=0} or \cref{vf(e_uv)vf(e_x)=0}. Otherwise, consider the product $(e_x+e_u)(e_x+e_v)=e_x$ and apply \cref{vf(e_x)^2=e_y,vf(e_x)vf(e_uv)=0,vf(e_uv)vf(e_x)=0}.
		
		For \cref{r(vf(e_ux)vf(e_x)-vf(e_u)vf(e_ux))-r^2vf(e_ux)^2=0} write $(e_x+e_u+re_{ux})(e_x-re_{ux})=e_x$ and use \cref{vf(e_x)^2=e_y,vf(e_x)vf(e_uv)=0,vf(e_uv)vf(e_x)=0}. %If $F\ne\Z_2$, then taking $r=r_1$ and $r=r_2$, where $r_1,r_2\in F^*$ and $r_1\ne r_2$, we obtain $\vf(e_x)\vf(e_{xv})-\vf(e_{xv})\vf(e_v)=\vf(e_{xv})^2=0$. 
		Item \cref{r(vf(e_x)vf(e_xv)-vf(e_xv)vf(e_v))-r^2vf(e_xv)^2=0} is similar and follows from $(e_x-re_{xv})(e_x+e_v+re_{xv})=e_x$.
		
		For \cref{vf(e_ux)vf(e_z)=0} write $(e_x+e_u+e_{ux})(e_x-e_{ux}+e_z)=e_x$ and use \cref{r(vf(e_ux)vf(e_x)-vf(e_u)vf(e_ux))-r^2vf(e_ux)^2=0,vf(e_x)vf(e_uv)=0,vf(e_u)vf(e_v)=0}. Analogously,
		\cref{vf(e_z)vf(e_xv)=0} is a consequence of \cref{r(vf(e_x)vf(e_xv)-vf(e_xv)vf(e_v))-r^2vf(e_xv)^2=0,vf(e_uv)vf(e_x)=0,vf(e_u)vf(e_v)=0}.
		% 		To prove \cref{vf(e_xu)vf(e_xv)=0}, consider the product $(e_x+e_{xu}-e_{xv})(e_x+e_v+e_{xv})=e_x$ and use \cref{vf(e_x)^2=e_y,vf(e_u)vf(e_v)=0,vf(e_xv)vf(e_z)=0,r(vf(e_x)vf(e_xv)-vf(e_xv)vf(e_v))-r^2vf(e_xv)^2=0} (with $r=1$). Similarly, \cref{vf(e_ux)vf(e_vx)=0} is due to $(e_x+e_v+e_{vx})(e_x+e_{ux}-e_{vx})=e_x$ and \cref{vf(e_x)^2=e_y,vf(e_z)vf(e_ux)=0,vf(e_u)vf(e_v)=0,r(vf(e_ux)vf(e_x)-vf(e_u)vf(e_ux))-r^2vf(e_ux)^2=0} (with $r=1$).
		% 		For \cref{vf(e_xu)vf(e_uv)=vf(e_x)vf(e_xv)} take the product $(e_x+e_{xu})(e_x+e_{uv}-e_{xv})=e_x$ and use \cref{vf(e_x)^2=e_y,vf(e_x)vf(e_uv)=0,vf(e_uv)vf(e_x)=0,vf(e_xu)vf(e_xv)=0}. Similarly, \cref{vf(e_uv)vf(e_vx)=vf(e_ux)vf(e_x)} follows from $(e_x+e_{uv}-e_{ux})(e_x+e_{vx})=e_x$ and \cref{vf(e_x)^2=e_y,vf(e_x)vf(e_uv)=0,vf(e_uv)vf(e_x)=0,vf(e_ux)vf(e_vx)=0}.
	\end{proof}
	
	\begin{cor}\label{vf(e_ux)(z_z)-and-vf(e_xv)(z_z)}
		Let $z\in X\setminus\{y\}$.
		\begin{enumerate}
			\item If $\vf(e_u)(z,z)=0$ for some $u<x$, then $\vf(e_{ux})(z,z)=0$.\label{vf(e_ux)(z_z)=0}
			\item If $\vf(e_v)(z,z)=0$ for some $v>x$, then $\vf(e_{xv})(z,z)=0$.\label{vf(e_xv)(z_z)=0}
		\end{enumerate}
	\end{cor}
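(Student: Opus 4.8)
The plan is to exploit the elementary but crucial fact that evaluating an element of $I(X,F)$ at a fixed diagonal position is an algebra homomorphism. Indeed, for $f,g\in I(X,F)$ one has $(fg)(z,z)=\sum_{z\le w\le z}f(z,w)g(w,z)=f(z,z)g(z,z)$, because $z$ is the only element of the interval $[z,z]$. Hence the evaluation $\pi_z\colon I(X,F)\to F$, $\pi_z(f)=f(z,z)$, is linear and multiplicative, and the whole argument consists in applying $\pi_z$ to the multiplicative identities already supplied by \cref{properties-of-vf}.

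First I would record the one place where the hypothesis $z\in X\setminus\{y\}$ is used. Applying $\pi_z$ to the identity $\vf(e_x)^2=e_y$ from \cref{properties-of-vf} gives $\vf(e_x)(z,z)^2=e_y(z,z)$, and since $z\ne y$ we have $e_y(z,z)=0$; as $F$ is a field, this forces $\vf(e_x)(z,z)=0$. This is exactly the input that makes the rest go through.

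For part (i), I would apply the homomorphism $\pi_z$ to the identity $\vf(e_{ux})\vf(e_x)-\vf(e_u)\vf(e_{ux})=r\vf(e_{ux})^2$ of \cref{properties-of-vf} (valid for any $r\in F^*$), obtaining
\[
\vf(e_{ux})(z,z)\,\vf(e_x)(z,z)-\vf(e_u)(z,z)\,\vf(e_{ux})(z,z)=r\,\vf(e_{ux})(z,z)^2.
\]
Substituting $\vf(e_x)(z,z)=0$ from the previous step together with the hypothesis $\vf(e_u)(z,z)=0$, the left-hand side vanishes, leaving $r\,\vf(e_{ux})(z,z)^2=0$; since $r\ne 0$ and $F$ is a field, this yields $\vf(e_{ux})(z,z)=0$. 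Part (ii) is entirely symmetric: applying $\pi_z$ to $\vf(e_x)\vf(e_{xv})-\vf(e_{xv})\vf(e_v)=r\vf(e_{xv})^2$ and using $\vf(e_x)(z,z)=0$ together with $\vf(e_v)(z,z)=0$ gives $r\,\vf(e_{xv})(z,z)^2=0$, hence $\vf(e_{xv})(z,z)=0$.

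I do not expect a genuine obstacle here: the only real idea is the observation that diagonal evaluation is multiplicative, after which both parts reduce to substituting zeros into the commutator-type relations of \cref{properties-of-vf}. The single point to flag is that the assumption $z\ne y$ is essential, since it is precisely what annihilates $\vf(e_x)(z,z)$; without it the cross term $\vf(e_{ux})(z,z)\,\vf(e_x)(z,z)$ would survive and the conclusion could fail.
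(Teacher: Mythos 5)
Your proof is correct and is essentially identical to the paper's: both arguments first derive $\vf(e_x)(z,z)=0$ from $\vf(e_x)^2=e_y$ and $z\ne y$, then evaluate the relation $\vf(e_{ux})\vf(e_x)-\vf(e_u)\vf(e_{ux})=r\vf(e_{ux})^2$ (and its symmetric counterpart) at the diagonal position $(z,z)$ to conclude. Your explicit remark that $f\mapsto f(z,z)$ is an algebra homomorphism is a nice clarification of what the paper uses implicitly, but it is the same proof.
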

	\begin{proof}
		Let us prove \cref{vf(e_ux)(z_z)=0}. Using \cref{properties-of-vf}\cref{vf(e_x)^2=e_y} we have $\vf(e_x)(z,z)^2=e_y(z,z)=0$, so $\vf(e_x)(z,z)=0$. Therefore, 
		\begin{align*}
			r\vf(e_{ux})(z,z)^2=\vf(e_{ux})(z,z)\vf(e_x)(z,z)-\vf(e_u)(z,z)\vf(e_{ux})(z,z)=0
		\end{align*}
		by \cref{properties-of-vf}\cref{r(vf(e_ux)vf(e_x)-vf(e_u)vf(e_ux))-r^2vf(e_ux)^2=0}, whence $\vf(e_{ux})(z,z)=0$. The proof of \cref{vf(e_xv)(z_z)=0} similarly follows from \cref{properties-of-vf}\cref{r(vf(e_x)vf(e_xv)-vf(e_xv)vf(e_v))-r^2vf(e_xv)^2=0}.
	\end{proof}
	
	From now on we view $I(X\setminus\{x\},F)$ and $I(X\setminus\{y\},F)$ as subspaces of $I(X,F)$ closed under multiplication. Their identity elements are $e_{X\sm\{x\}}$ and $e_{X\sm\{y\}}$, respectively.
	
	\begin{lem}
		Let $f\in I(X,F)$. If $f^2=e_y$, then $f=\pm e_y+g$ for some $g\in J(I(X\setminus\{y\},F))$ with $g^2=0$.
	\end{lem}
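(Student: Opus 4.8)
The plan is to split $f$ into its diagonal and radical parts and treat them separately. First I would write $f=f_D+f_U$ with $f_D\in D(X,F)$ and $f_U\in J(I(X,F))$, and take the diagonal component of $f^2=e_y$. Since any product having a radical factor is again radical, the diagonal part of $f^2$ is exactly $f_D^2$, so $f_D^2=e_y$. Writing $f_D=\sum_{z\in X}f(z,z)e_z$ and using that the $e_z$ are pairwise orthogonal idempotents, this forces $f(y,y)^2=1$ and $f(z,z)^2=0$ for every $z\ne y$; as $F$ is a field we get $f(z,z)=0$ for $z\ne y$ and $f(y,y)=\pm1$, i.e. $f_D=\pm e_y$. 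Thus $f=\pm e_y+g$ with $g:=f_U\in J(I(X,F))$, and it remains to locate $g$ and to show that $g^2=0$.

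Set $s=\pm1$ so that $f=se_y+g$. Expanding $f^2=e_y$ and cancelling $e_y=(se_y)^2$ yields the key relation
\[
e_yg+ge_y=-sg^2 .
\]
The heart of the argument is to extract from this that the $y$-row and $y$-column of $g$ both vanish. Multiplying the relation on the left by $e_y$ and using $e_yge_y=g(y,y)e_y=0$ gives $e_yg=-s(e_yg)g$; writing $p:=e_yg$ this reads $p=-spg$. Iterating, $p=(-s)^kpg^k$ for every $k$, and since the radical of the finite-dimensional algebra $I(X,F)$ is nilpotent, $g^k=0$ for large $k$, whence $p=e_yg=0$. Symmetrically, multiplying the relation on the right by $e_y$ gives $q=-sgq$ for $q:=ge_y$, and the same nilpotency argument forces $ge_y=0$.

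Now $e_yg=0$ and $ge_y=0$ say precisely that $g(y,v)=0$ for all $v>y$ and $g(u,y)=0$ for all $u<y$; together with $g$ being radical (so $g(z,z)=0$ for all $z$), this means the support of $g$ avoids $y$ entirely, that is, $g\in J(I(X\sm\{y\},F))$. Finally, substituting $e_yg=ge_y=0$ back into the key relation gives $-sg^2=0$, hence $g^2=0$, as required. I expect the only genuinely delicate point to be the cancellation of the $y$-row and $y$-column: the relation mixes the ``first order'' term $e_yg+ge_y$ with the ``second order'' term $g^2$, and it is the nilpotency of $g$ that lets the self-referential identity $p=-spg$ collapse to $p=0$; everything else is bookkeeping with the diagonal/radical decomposition.
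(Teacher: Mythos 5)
Your proof is correct, and while it uses the same decomposition as the paper ($f=f_D+f_U$, with the identical first step forcing $f_D=\pm e_y$), it handles the crux — the vanishing of the $y$-row and $y$-column of $g$ — by a genuinely different mechanism. The paper expands $f^2(u,y)=\pm f(u,y)+\sum_{u<v<y}f(u,v)f(v,y)$ entrywise and inducts on the interval length $l(u,y)$ (and symmetrically on $l(y,v)$); you instead derive the coordinate-free relation $e_yg+ge_y=-sg^2$, left- and right-multiply by $e_y$ (using $e_yge_y=g(y,y)e_y=0$) to obtain the self-referential identities $e_yg=-s(e_yg)g$ and $ge_y=-sg(ge_y)$, and collapse both via nilpotency of the radical. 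These are two incarnations of the same finiteness: $J(I(X,F))^k$ is spanned by the $e_{uv}$ with $l(u,v)\ge k$, so the nilpotency you invoke (true since $J^{l(X)+1}=0$, or by the general fact for finite-dimensional algebras) is exactly what powers the paper's induction on interval length. Your route buys brevity and generality — it works verbatim for any element $f=se+g$ in any algebra with $e$ idempotent, $ege=0$, and $g$ in a nil ideal, with no reference to the standard basis — while the paper's entrywise induction buys self-containedness, needing nothing beyond the multiplication formula of $I(X,F)$. Note also that your concluding step ($-sg^2=e_yg+ge_y=0$) and the paper's ($e_y=f^2=e_y+g^2$, hence $g^2=0$) are the same computation read in opposite directions.
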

	\begin{proof}
		It immediately follows that $f_D=\pm e_y$. Now take $u<y$ and write
		\begin{align*}
			0&=e_y(u,y)=f^2(u,y)=f(u,y)f(y,y)+\sum_{u<v<y}f(u,v)f(v,y)\\
			&=\pm f(u,y)+\sum_{u<v<y}f(u,v)f(v,y),
		\end{align*}
		whence
		\begin{align}\label{f(u_y)=sum_f(u_v)f(v_y)}
			f(u,y)=\mp\sum_{u<v<y}f(u,v)f(v,y).
		\end{align}
		If $l(u,y)=1$, then there is no $v\in X$ with $u<v<y$, so the sum on the right-hand side of \cref{f(u_y)=sum_f(u_v)f(v_y)} vanishes, and $f(u,y)=0$. If $l(u,y)>1$, then $l(v,y)<l(u,y)$ for all $u<v<y$, so the obvious induction argument implies $f(u,y)=0$.
		
		Similarly, for all $y<v$ one has
		\begin{align*}
			0&=e_y(y,v)=f^2(y,v)=f(y,y)f(y,v)+\sum_{y<u<v}f(y,u)f(u,v)\\
			&=\pm f(y,v)+\sum_{y<u<v}f(y,u)f(u,v),
		\end{align*}
		whence $f(y,v)=0$ by induction on $l(y,v)$.
		
			Thus, $f=\pm e_y+g$, where $g=f_U$. Observe that $e_yg=\sum_{y<v}f(y,v)e_{yv}=0$ and $ge_y=\sum_{u<y}f(u,y)e_{uy}=0$. Therefore, $e_y=f^2=e_y\pm e_yg\pm ge_y+g^2=e_y+g^2$, whence $g^2=0$.
	\end{proof}
	
	\begin{cor}\label{vf(e_x)=pm.e_y+g}
		We have $\vf(e_x)=\pm e_y+g$ for some $g\in J(I(X\setminus\{y\},F))$ with $g^2=0$.
	\end{cor}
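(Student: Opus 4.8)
The plan is to recognize this as an immediate specialization of the unnamed Lemma stated just above, applied to the particular element $f=\vf(e_x)$. The only thing that needs checking is the hypothesis of that Lemma, namely that the square of the element in question equals $e_y$. But this is precisely what \cref{properties-of-vf}\cref{vf(e_x)^2=e_y} asserts: $\vf(e_x)^2=e_y$. Hence the element $f=\vf(e_x)$ satisfies $f^2=e_y$, and the Lemma applies verbatim.

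Concretely, I would write: by \cref{properties-of-vf}\cref{vf(e_x)^2=e_y} we have $\vf(e_x)^2=e_y$, so applying the preceding Lemma with $f=\vf(e_x)$ yields $\vf(e_x)=\pm e_y+g$ for some $g\in J(I(X\setminus\{y\},F))$ with $g^2=0$, as desired. No further decomposition or induction is required here, since all of the work—extracting the diagonal part $f_D=\pm e_y$, killing the off-diagonal entries in the $y$-row and $y$-column by the induction on $l(u,y)$ and $l(y,v)$, and deriving $g^2=0$ from $e_yg=ge_y=0$—has already been carried out inside the proof of that Lemma.

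There is essentially no obstacle to overcome: the corollary is a direct instantiation, and the entire substance lives in the two cited prior results (the quadratic identity $\vf(e_x)^2=e_y$ and the structural Lemma on square roots of $e_y$). The only point worth stating explicitly, for the reader's orientation, is why $g$ lands in $J(I(X\setminus\{y\},F))$ rather than merely in $J(I(X,F))$: this is because the Lemma's argument forces all entries $f(u,y)$ and $f(y,v)$ touching the index $y$ to vanish, so the radical part $g=f_U$ has support disjoint from the row and column of $y$ and therefore lies in the radical of the incidence algebra of $X\setminus\{y\}$, viewed as a subspace of $I(X,F)$ closed under multiplication as discussed above.
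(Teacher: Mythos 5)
Your proposal is correct and matches the paper exactly: the paper states this corollary without proof precisely because it is the immediate instantiation of the preceding lemma at $f=\vf(e_x)$, whose hypothesis $f^2=e_y$ is supplied by \cref{properties-of-vf}\cref{vf(e_x)^2=e_y}. Your additional remark explaining why $g$ lies in $J(I(X\setminus\{y\},F))$ rather than just $J(I(X,F))$ is accurate and faithfully summarizes the lemma's internal argument.
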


	\begin{lem}\label{vf(I(X-minus-x))-sst-I(X-minus-y)}
		We have $\vf(I(X\setminus\{x\},F))\sst I(X\setminus\{y\},F)$. Moreover, $\vf$ is a zero product preserver from $I(X\setminus\{x\},F)$ to $I(X\setminus\{y\},F)$.
	\end{lem}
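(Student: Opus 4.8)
The plan is to derive both assertions directly from the defining property \cref{fg=e_x=>vf(f)vf(g)=e_y} together with \cref{properties-of-vf}, bypassing the basis criterion of \cref{zp-on-the-basis}. The organizing observation is that an element $h\in I(X,F)$ lies in $I(X\sm\{y\},F)$ precisely when $e_yh=0=he_y$: indeed $e_yh=\sum_{y\le q}h(y,q)e_{yq}$ records the entries of $h$ in row $y$ and $he_y=\sum_{p\le y}h(p,y)e_{py}$ records those in column $y$, so the two vanish exactly when $h$ is supported off the index $y$.

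For the inclusion, fix $f\in I(X\sm\{x\},F)$, so that $f=\sum_{u\le v,\,u,v\ne x}f(u,v)e_{uv}$. First I would compute $e_y\vf(f)=\vf(e_x)^2\vf(f)=\vf(e_x)\bigl(\vf(e_x)\vf(f)\bigr)$ using \cref{properties-of-vf}\cref{vf(e_x)^2=e_y}; since every index $u$ occurring in $f$ is distinct from $x$, \cref{properties-of-vf}\cref{vf(e_x)vf(e_uv)=0} gives $\vf(e_x)\vf(f)=0$, whence $e_y\vf(f)=0$. Symmetrically, because every second index $v$ differs from $x$, one gets $\vf(f)e_y=\bigl(\vf(f)\vf(e_x)\bigr)\vf(e_x)=0$ from \cref{properties-of-vf}\cref{vf(e_uv)vf(e_x)=0}. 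By the opening observation this shows $\vf(f)\in I(X\sm\{y\},F)$, proving $\vf(I(X\sm\{x\},F))\sst I(X\sm\{y\},F)$.

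For the ``moreover'' part the idea is to realize a vanishing product inside $I(X\sm\{x\},F)$ as a product equal to $e_x$ inside $I(X,F)$. Suppose $f,g\in I(X\sm\{x\},F)$ satisfy $fg=0$. As $f$ has no column $x$ and $g$ no row $x$, we have $fe_x=0=e_xg$, and therefore
\[
(e_x+f)(e_x+g)=e_x+e_xg+fe_x+fg=e_x.
\]
Applying \cref{fg=e_x=>vf(f)vf(g)=e_y} and expanding by linearity yields
\[
\vf(e_x)^2+\vf(e_x)\vf(g)+\vf(f)\vf(e_x)+\vf(f)\vf(g)=e_y.
\]
Here the first term is $e_y$ by \cref{properties-of-vf}\cref{vf(e_x)^2=e_y}, while the second and third vanish exactly as in the inclusion argument via \cref{properties-of-vf}\cref{vf(e_x)vf(e_uv)=0} and \cref{properties-of-vf}\cref{vf(e_uv)vf(e_x)=0}. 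Hence $\vf(f)\vf(g)=0$, and together with the inclusion already established this says that $\vf$ restricts to a zero product preserver $I(X\sm\{x\},F)\to I(X\sm\{y\},F)$.

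I expect the only genuinely non-routine step to be spotting the identity $(e_x+f)(e_x+g)=e_x$, which converts the hypothesis $fg=0$ into an instance of the preservation property \cref{fg=e_x=>vf(f)vf(g)=e_y}; once this is in place, everything reduces to bookkeeping of the products recorded in \cref{properties-of-vf}, and no appeal to \cref{zp-on-the-basis} is needed.
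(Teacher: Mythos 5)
Your proof is correct and follows essentially the same route as the paper's: the inclusion is obtained by multiplying $\vf(e_x)\vf(e_{uv})=0$ (resp.\ $\vf(e_{uv})\vf(e_x)=0$) by $\vf(e_x)$ and using $\vf(e_x)^2=e_y$ to get $e_y\vf(f)=\vf(f)e_y=0$, and the zero-product-preserving part rests on exactly the identity $(e_x+f)(e_x+g)=e_x$ followed by the same cancellations from \cref{properties-of-vf}. Note that the paper's own proof likewise makes no use of \cref{zp-on-the-basis} here, so your ``bypass'' is not actually a deviation.
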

	\begin{proof}
		Let $u,v\in X\setminus\{x\}$, $u\le v$. Then $\vf(e_x)\vf(e_{uv})=0$ by \cref{properties-of-vf}\cref{vf(e_x)vf(e_uv)=0}. Multiplying this by $\vf(e_x)$ on the left and using \cref{properties-of-vf}\cref{vf(e_x)^2=e_y} we obtain $e_y\vf(e_{uv})=0$. Similarly $\vf(e_{uv})e_y=0$ by \cref{vf(e_x)^2=e_y,vf(e_uv)vf(e_x)=0} of \cref{properties-of-vf}. Therefore, $\vf(e_{uv})(a,y)=\vf(e_{uv})(y,b)$ for all $a\le y\le b$.  This proves the desired inclusion.
		
		Now take $f,g\in I(X\setminus\{x\},F)$ such that $fg=0$. It is easily seen that $(e_x+f)(e_x+g)=e_x$ in $I(X,F)$. Hence, $(\vf(e_x)+\vf(f))(\vf(e_x)+\vf(g))=e_y$ thanks to \cref{fg=e_x=>vf(f)vf(g)=e_y}. The latter is equivalent to $\vf(f)\vf(g)=0$ by \cref{vf(e_x)^2=e_y,vf(e_x)vf(e_uv)=0,vf(e_uv)vf(e_x)=0} of \cref{properties-of-vf}.
	\end{proof}
	
	Observe that $\dim(I(X\setminus\{x\},F))=|(X\setminus\{x\})^2_\le|$ and $\dim(I(X\setminus\{y\},F))=|(X\setminus\{y\})^2_\le|$, so we have the following immediate consequence of \cref{vf(I(X-minus-x))-sst-I(X-minus-y)}.
	\begin{cor}\label{|(X-minus-x)^2_le|>|(X-minus-y)^2_le|}
		If $|(X\setminus\{x\})^2_\le|>|(X\setminus\{y\})^2_\le|$, then there is no bijective linear map $\vf:I(X,F)\to I(X,F)$ satisfying \cref{fg=e_x=>vf(f)vf(g)=e_y}.
	\end{cor}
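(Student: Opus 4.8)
The plan is to derive the statement directly from \cref{vf(I(X-minus-x))-sst-I(X-minus-y)} by an elementary dimension count, so the argument is entirely routine. Suppose, towards a contradiction, that a bijective linear map $\vf:I(X,F)\to I(X,F)$ satisfying \cref{fg=e_x=>vf(f)vf(g)=e_y} does exist. Since $\vf$ is bijective it is in particular injective, and hence its restriction to the subspace $I(X\sm\{x\},F)$ is again an injective $F$-linear map.

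Next I would invoke \cref{vf(I(X-minus-x))-sst-I(X-minus-y)}, which guarantees that this restriction takes its values in $I(X\sm\{y\},F)$. Thus $\vf$ induces an injective linear map $I(X\sm\{x\},F)\to I(X\sm\{y\},F)$ between finite-dimensional $F$-vector spaces. An injective linear map cannot strictly decrease dimension, so one obtains $\dim(I(X\sm\{x\},F))\le\dim(I(X\sm\{y\},F))$.

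Finally, using the two dimension formulas $\dim(I(X\sm\{x\},F))=|(X\sm\{x\})^2_\le|$ and $\dim(I(X\sm\{y\},F))=|(X\sm\{y\})^2_\le|$ recorded immediately before the statement, this inequality reads $|(X\sm\{x\})^2_\le|\le|(X\sm\{y\})^2_\le|$, which directly contradicts the hypothesis $|(X\sm\{x\})^2_\le|>|(X\sm\{y\})^2_\le|$. The only thing worth verifying is the validity of those two dimension formulas, and these are immediate: the standard basis $\{e_{uv}\mid u\le v\}$ of an incidence algebra is indexed precisely by the comparable pairs of the underlying poset, so the dimension equals the number of such pairs. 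There is no genuine obstacle here; the corollary is a formal consequence of the inclusion established in \cref{vf(I(X-minus-x))-sst-I(X-minus-y)} together with the injectivity of $\vf$.
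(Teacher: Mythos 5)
Your proposal is correct and follows exactly the paper's intended argument: the paper presents this corollary as an immediate consequence of \cref{vf(I(X-minus-x))-sst-I(X-minus-y)} together with the dimension count $\dim(I(X\sm\{x\},F))=|(X\sm\{x\})^2_\le|$ and $\dim(I(X\sm\{y\},F))=|(X\sm\{y\})^2_\le|$, which is precisely your injectivity-plus-dimension argument spelled out. Nothing is missing.
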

	
	Thus, in what follows we assume $|(X\setminus\{x\})^2_\le|\le|(X\setminus\{y\})^2_\le|$.
	
	\begin{cor}\label{vf(J(I(X-minus-x)))-sst-J(I(X-minus-y))}
		We have $\vf(J(I(X\setminus\{x\},F)))\sst J(I(X\setminus\{y\},F))$.
	\end{cor}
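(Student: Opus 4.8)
The plan is to verify the inclusion on the standard radical basis and then extend by linearity. Recall that $J(I(X\sm\{x\},F))$ is spanned by the elements $e_{uv}$ with $u<v$ and $u,v\ne x$, so it suffices to show that $\vf(e_{uv})\in J(I(X\sm\{y\},F))$ for each such pair $(u,v)$.

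First I would exploit that every one of these basis elements is square-zero: since $u<v$ we have $e_{uv}^2=0$ in $I(X\sm\{x\},F)$. By \cref{vf(I(X-minus-x))-sst-I(X-minus-y)} the restriction of $\vf$ to $I(X\sm\{x\},F)$ is a zero product preserver into $I(X\sm\{y\},F)$; applying this property to the zero product $e_{uv}e_{uv}=0$ immediately yields $\vf(e_{uv})^2=0$, while the same lemma guarantees $\vf(e_{uv})\in I(X\sm\{y\},F)$.

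It then remains to observe that a square-zero element of an incidence algebra is automatically radical. Indeed, the diagonal projection $f\mapsto f_D$ is an algebra homomorphism onto the commutative reduced algebra $D(X\sm\{y\},F)\cong\prod_{z\ne y}F$, so $\vf(e_{uv})^2=0$ forces $\vf(e_{uv})(z,z)^2=0$, hence $\vf(e_{uv})(z,z)=0$, for every $z\in X\sm\{y\}$. Together with the vanishing of all entries involving $y$ (built into membership in $I(X\sm\{y\},F)$) this gives $\vf(e_{uv})_D=0$, i.e. $\vf(e_{uv})\in J(I(X\sm\{y\},F))$. Linearity then delivers the claimed inclusion.

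There is essentially no hard step here: the whole argument rests on the identity $e_{uv}^2=0$ and the zero-product-preserving property already established in \cref{vf(I(X-minus-x))-sst-I(X-minus-y)}, so I would not even need the finer relations of \cref{properties-of-vf}. The only point requiring (routine) care is the last one, namely that square-nilpotency forces the diagonal to vanish and hence places the element in the radical, which uses only that $D(X\sm\{y\},F)$ is a product of copies of $F$ and therefore has no nonzero nilpotents. In particular, the standing dimension inequality $|(X\sm\{x\})^2_\le|\le|(X\sm\{y\})^2_\le|$ plays no role in this corollary.
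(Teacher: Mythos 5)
Your proof is correct and follows exactly the paper's argument: apply the zero-product-preserving property of \cref{vf(I(X-minus-x))-sst-I(X-minus-y)} to $e_{uv}^2=0$ to get $\vf(e_{uv})^2=0$, and conclude membership in the radical. You merely make explicit the final (routine) step that square-zero elements of $I(X\sm\{y\},F)$ have vanishing diagonal, which the paper leaves implicit.
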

	\begin{proof}
		Indeed, for any $u,v\in X\setminus\{x\}$, $u<v$, it follows from $e_{uv}^2=0$ and \cref{vf(I(X-minus-x))-sst-I(X-minus-y)} that $\vf(e_{uv})^2=0$, so $\vf(e_{uv})\in J(I(X\setminus\{y\},F))$. 
	\end{proof}
	
	\begin{lem}\label{vf(e_X-minus-x)-invertible}
		The element $\vf\left(e_{X\setminus\{x\}}\right)$ is invertible in $I(X\setminus\{y\},F)$.
	\end{lem}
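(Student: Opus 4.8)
The plan is to use the standard invertibility criterion for incidence algebras: an element $h\in I(X\setminus\{y\},F)$ is invertible precisely when $h(z,z)\ne0$ for every $z\in X\setminus\{y\}$ (apply \cite[Theorem 1.2.3]{SpDo} to $I(X\setminus\{y\},F)$, whose identity is $e_{X\setminus\{y\}}$). Since $\vf(e_{X\setminus\{x\}})\in I(X\setminus\{y\},F)$ by \cref{vf(I(X-minus-x))-sst-I(X-minus-y)}, it therefore suffices to prove that $\vf(e_{X\setminus\{x\}})(z,z)\ne0$ for each $z\in X\setminus\{y\}$. I would fix such a $z$ and argue by contradiction, assuming $\vf(e_{X\setminus\{x\}})(z,z)=0$.

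The key step is to show that, under this assumption, the linear functional $f\mapsto\vf(f)(z,z)$ vanishes on the whole standard basis of $I(X,F)$. First, writing $\vf(e_{X\setminus\{x\}})(z,z)=\sum_{u\ne x}\vf(e_u)(z,z)$ and evaluating the orthogonality relations \cref{properties-of-vf}\cref{vf(e_u)vf(e_v)=0} at $(z,z)$, one gets $\vf(e_u)(z,z)\vf(e_v)(z,z)=0$ for $u\ne v$, so at most one summand is nonzero; hence the vanishing of the sum forces $\vf(e_u)(z,z)=0$ for all $u\ne x$. Together with $\vf(e_x)(z,z)=0$ (which follows from $\vf(e_x)^2=e_y$ and $z\ne y$, exactly as in the proof of \cref{vf(e_ux)(z_z)-and-vf(e_xv)(z_z)}), this gives $\vf(e_u)(z,z)=0$ for every $u\in X$. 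Then \cref{vf(e_ux)(z_z)-and-vf(e_xv)(z_z)} yields $\vf(e_{ux})(z,z)=0$ for $u<x$ and $\vf(e_{xv})(z,z)=0$ for $v>x$, while for $u<v$ with $u,v\ne x$ the element $e_{uv}$ lies in $J(I(X\setminus\{x\},F))$, so $\vf(e_{uv})\in J(I(X\setminus\{y\},F))$ by \cref{vf(J(I(X-minus-x)))-sst-J(I(X-minus-y))} and hence has zero diagonal. These cases exhaust all basis elements $e_{uv}$ with $u\le v$.

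By linearity it would then follow that $\vf(f)(z,z)=0$ for every $f\in I(X,F)$. This contradicts the surjectivity of $\vf$: choosing $f$ with $\vf(f)=e_z$ gives $\vf(f)(z,z)=1$. Hence $\vf(e_{X\setminus\{x\}})(z,z)\ne0$ for all $z\in X\setminus\{y\}$, so $\vf(e_{X\setminus\{x\}})$ is invertible in $I(X\setminus\{y\},F)$. The main obstacle is the propagation step: the point is to recognize that the assumed vanishing of a single diagonal entry of $\vf(e_{X\setminus\{x\}})$, via orthogonality together with \cref{vf(e_ux)(z_z)-and-vf(e_xv)(z_z),vf(J(I(X-minus-x)))-sst-J(I(X-minus-y))}, actually annihilates the $(z,z)$-entry of the image of \emph{every} basis element, which is precisely what is incompatible with bijectivity.
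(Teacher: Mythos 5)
Your proof is correct and takes essentially the same route as the paper: both assume a diagonal entry $\vf\left(e_{X\setminus\{x\}}\right)(z,z)$ vanishes, use the orthogonality relations \cref{properties-of-vf}\cref{vf(e_u)vf(e_v)=0} together with \cref{vf(e_ux)(z_z)-and-vf(e_xv)(z_z),vf(J(I(X-minus-x)))-sst-J(I(X-minus-y))} to annihilate the $(z,z)$-entry of the image of every standard basis element, and then contradict surjectivity via $e_z\in\vf(I(X,F))$. The only cosmetic differences are that the paper organizes the orthogonality step through the pairwise disjoint supports $Z_u$ and cites \cref{vf(e_x)=pm.e_y+g} for $\vf(e_x)(z,z)=0$, whereas you argue pointwise and compute that entry directly from $\vf(e_x)^2=e_y$.
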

	\begin{proof}
		For all $u\in X\setminus\{x\}$ denote $Z_u=\{z\in X\mid \vf(e_u)(z,z)\ne 0\}\sst X\setminus\{y\}$. Observe that $Z_u\cap Z_v=\emptyset$ for $u\ne v$ thanks to \cref{properties-of-vf}\cref{vf(e_u)vf(e_v)=0}. Since $e_{X\setminus\{x\}}=\sum_{u\in X\setminus\{x\}}e_u$, we have
		\begin{align*}
			Z:=\left\{z\in X\mid \vf\left(e_{X\setminus\{x\}}\right)(z,z)\ne 0\right\}=\bigsqcup_{u\in X\setminus\{x\}} Z_u.
		\end{align*}
		Assume that $\vf\left(e_{X\setminus\{x\}}\right)\not\in I(X\setminus\{y\},F)^*$. Then $Z\ne X\setminus\{y\}$, so there exists $z_0\in X\setminus\{y\}$ such that $z_0\not\in Z$. It follows that $\vf(e_u)(z_0,z_0)=0$ for all $u\in X\setminus\{x\}$. By \cref{vf(e_ux)(z_z)-and-vf(e_xv)(z_z)} we conclude that $\vf(e_{ux})(z_0,z_0)=0$ for all $u<x$ and $\vf(e_{xv})(z_0,z_0)=0$ for all $v>x$. Furthermore, $\vf(e_{uv})(z_0,z_0)=0$ for all $u,v\in X\setminus\{x\}$, $u<v$, by \cref{vf(J(I(X-minus-x)))-sst-J(I(X-minus-y))}. Finally, $\vf(e_x)(z_0,z_0)=0$ by \cref{vf(e_x)=pm.e_y+g}. Thus, $\vf(f)(z_0,z_0)=0$ for all $f\in I(X,F)$, a contradiction with $e_{z_0}\in\vf(I(X,F))$.
	\end{proof}

	%	\subsection{The case \red{$F\ne\Z_2$}}
	
	%	In this subsection we assume that $F\ne\Z_2$.

	\begin{lem}\label{vf=c.psi}
		We have $\vf(e_x)=\pm e_y$ and there exist a monomorphism $\psi:I(X\setminus\{x\},F)\to I(X\setminus\{y\},F)$ and $c\in I(X\setminus\{y\},F)^*$ such that $\vf(f)=c\psi(f)$ for all $f\in I(X\setminus\{x\},F)$.
	\end{lem}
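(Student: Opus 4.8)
The plan is to put $c:=\vf(e_{X\sm\{x\}})$, which lies in $I(X\sm\{y\},F)^*$ by \cref{vf(e_X-minus-x)-invertible}, and to prove that $\psi:=c^{-1}\vf|_{I(X\sm\{x\},F)}$ is the desired monomorphism into $I(X\sm\{y\},F)$. For $u\le v$ in $X\sm\{x\}$ write $P_u:=\vf(e_u)$ and $Q_{uv}:=\vf(e_{uv})$. First I would collect the orthogonality relations: by \cref{properties-of-vf}\cref{vf(e_u)vf(e_v)=0} and \cref{vf(I(X-minus-x))-sst-I(X-minus-y)} (so that \cref{zp-on-the-basis} applies to the zero product preserver $\vf|_{I(X\sm\{x\},F)}$) we get $P_wP_u=0$ and $P_wQ_{uv}=0$ for $w\ne u$, and $Q_{uv}P_w=0$ for $w\ne v$. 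Since $c=\sum_{w\ne x}P_w$, summing these over $w$ yields $cP_u=P_uc=P_u^2$, $cQ_{uv}=P_uQ_{uv}$ and $Q_{uv}c=Q_{uv}P_v$. Applying zero product preservation to $(e_u+e_{uv})(e_v-e_{uv})=0$ gives $P_uQ_{uv}=Q_{uv}P_v$ for every $u<v$ (this is the covering case absent from the literal statement of \cref{zp-on-the-basis}), so $c$ commutes with each $P_u$ and each $Q_{uv}$, hence with all of $\vf(I(X\sm\{x\},F))$; therefore $c^{-1}$ commutes with this image as well.

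Next I would check that $\psi$ preserves products of standard basis elements, which suffices by linearity. Because $c^{-1}$ commutes with the image, $\psi(e_{ab})\psi(e_{cd})=c^{-2}\vf(e_{ab})\vf(e_{cd})$ for all basis elements. If $e_{ab}e_{cd}=0$, i.e. $b\ne c$, the right-hand side vanishes by zero product preservation and equals $\psi(0)$. If $b=c$, so $e_{ab}e_{cd}=e_{ad}$, I must verify $\vf(e_{ab})\vf(e_{bd})=c\,\vf(e_{ad})$; according to which of $a<b$ and $b<d$ is strict this reduces to one of $P_a^2=cP_a$, $P_aQ_{ad}=cQ_{ad}$, $Q_{ad}P_d=cQ_{ad}$, or $Q_{ab}Q_{bd}=P_aQ_{ad}=cQ_{ad}$, the middle equality in the last being exactly the constant product relation \cref{vf(e_xz)_vf(e_zy)=const} of \cref{zp-on-the-basis} for the chain $a<b<d$. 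Hence $\psi$ is an algebra homomorphism, and $\psi(e_{X\sm\{x\}})=c^{-1}c=e_{X\sm\{y\}}$ shows it is unital.

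For injectivity, if $\psi(f)=0$ then $\vf(f)=cc^{-1}\vf(f)=c\psi(f)=0$, using that $cc^{-1}=e_{X\sm\{y\}}$ is the identity of $I(X\sm\{y\},F)$ and fixes $\vf(f)$; bijectivity of $\vf$ then gives $f=0$. Thus $\psi$ is a monomorphism with $\vf(f)=c\psi(f)$ on $I(X\sm\{x\},F)$. Finally I would sharpen \cref{vf(e_x)=pm.e_y+g} to $\vf(e_x)=\pm e_y$: since $\vf(e_x)\vf(e_w)=0$ for $w\ne x$ by \cref{properties-of-vf}\cref{vf(e_u)vf(e_v)=0}, we have $\vf(e_x)c=\sum_{w\ne x}\vf(e_x)\vf(e_w)=0$, and right-multiplying by $c^{-1}\in I(X\sm\{y\},F)$ gives $\vf(e_x)e_{X\sm\{y\}}=0$. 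Writing $\vf(e_x)=\pm e_y+g$ with $g\in J(I(X\sm\{y\},F))$ as in \cref{vf(e_x)=pm.e_y+g}, and using $e_ye_{X\sm\{y\}}=0$ together with $g\,e_{X\sm\{y\}}=g$, we conclude $g=0$, i.e. $\vf(e_x)=\pm e_y$.

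The main obstacle is the homomorphism step of the second paragraph, which rests on $c$ being central for the image. This in turn needs the relation $P_uQ_{uv}=Q_{uv}P_v$ to hold for covering pairs $u<v$ as well, a case not covered by the written form of \cref{zp-on-the-basis} and most safely obtained directly from $(e_u+e_{uv})(e_v-e_{uv})=0$. Once the commuting relations and the constant product relation are available, the remaining case analysis is routine.
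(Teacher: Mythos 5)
Your proof is correct, but it takes a genuinely different route from the paper's for the main decomposition step. The paper disposes of $\vf=c\psi$ in one stroke: since $I(X\sm\{x\},F)$ has a basis consisting of idempotents, $\vf|_{I(X\sm\{x\},F)}$ is a zero product preserver by \cref{vf(I(X-minus-x))-sst-I(X-minus-y)}, and $c=\vf(e_{X\sm\{x\}})$ is invertible by \cref{vf(e_X-minus-x)-invertible}, so it simply invokes Theorem 2.6(vi) of Chebotar--Ke--Lee--Wong \cite{Chebotar-Ke-Lee-Wong03} to produce the homomorphism $\psi$ with $\vf=c\psi$, injectivity following as in your argument. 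You instead reprove the needed special case of that theorem by hand: you establish that $c$ commutes with the image $\vf(I(X\sm\{x\},F))$ via the orthogonality relations $P_wP_u=P_wQ_{uv}=Q_{uv}P_{w'}=0$ ($w\ne u$, $w'\ne v$) together with $P_uQ_{uv}=Q_{uv}P_v$, and then verify multiplicativity of $\psi=c^{-1}\vf$ on the standard basis by a four-way case analysis; every step checks out, including the subtle point you flag --- the identity $\vf(e_u)\vf(e_{uv})=\vf(e_{uv})\vf(e_v)$ for covering pairs is not literally asserted by \cref{vf(e_xz)_vf(e_zy)=const} (which is quantified over $x<z<y$), though the paper's proof of \cref{zp-on-the-basis} in fact derives it from $(e_x+e_{xz})(e_{zy}-e_{xy})=0$ for all $x<z\le y$, the instance $z=y$ being exactly your computation with $(e_u+e_{uv})(e_v-e_{uv})=0$. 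Your final sharpening $\vf(e_x)=\pm e_y$ coincides with the paper's: $\vf(e_x)c=0$ by \cref{properties-of-vf}\cref{vf(e_u)vf(e_v)=0}, hence $g\,e_{X\sm\{y\}}=g=0$ by invertibility of $c$, starting from \cref{vf(e_x)=pm.e_y+g}. What the citation buys is brevity and generality (the result of \cite{Chebotar-Ke-Lee-Wong03} applies to any algebra generated by idempotents with invertible $\vf(1)$); what your self-contained argument buys is explicitness --- one sees directly why $c$ is central for the image, and you get unitality of $\psi$ for free --- at the price of leaning on the particular idempotent/nilpotent structure of the incidence-algebra basis.
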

	\begin{proof}
		Recall that $I(X\setminus\{x\},F)$ has a basis formed by idempotents. Therefore, since $\vf:I(X\sm\{x\},F)\to I(X\sm\{y\},F)$ preserves zero products by \cref{vf(I(X-minus-x))-sst-I(X-minus-y)} and $\vf(e_{X\setminus\{x\}})$ is invertible in $I(X\sm\{y\},F)$, it follows from \cite[Theorem 2.6(vi)]{Chebotar-Ke-Lee-Wong03} that there are a homomorphism $\psi:I(X\setminus\{x\},F)\to I(X\setminus\{y\},F)$ and $c=\vf(e_{X\setminus\{x\}})\in I(X\setminus\{y\},F)^*$ such that $\vf(f)=c\psi(f)$ for all $f\in I(X\setminus\{x\},F)$. The homomorphism $\psi$ is injective, because $\vf$ is injective and $c$ is invertible. Recall from \cref{vf(e_x)=pm.e_y+g} that $\vf(e_x)=\pm e_y+g$, where $g\in I(X\setminus\{y\},F)$. Now, $\vf(e_x)c=\vf(e_x)\vf(e_{X\setminus\{x\}})=0$ by \cref{properties-of-vf}\cref{vf(e_u)vf(e_v)=0}. On the other hand, $\vf(e_x)c=\pm e_yc+gc=gc$. Hence, $gc=0$. But $c$ is invertible in $I(X\setminus\{y\},F)$, so $g=0$. Thus, $\vf(e_x)=\pm e_y$.
	\end{proof}
	
	\begin{lem}\label{lem conjugate T} 
		There exist $\beta\in I(X,F)^*$, $\{\af_z\}_{z\in X}\sst F^*$ and a bijection $\lb:X\to X$ with $\lb(x)=y$ such that 
		\begin{align}\label{bt.vf(e_z).bt-inv=af_z.e_lb(z)}
			\beta \vf(e_z) \beta^{-1}=\af_z e_{\lb(z)}
		\end{align}
		for all $z\in X$. Moreover, 
		\begin{align}\label{bt.e_y.bt-inv=e_y}
			\bt e_y\bt\m=e_y   
		\end{align}
		and $\af_x=1\iff \vf(e_x)=e_y$, $\af_x=-1\iff \vf(e_x)=-e_y$.
	\end{lem}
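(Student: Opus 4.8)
The plan is to show that the family $\{\vf(e_z)\}_{z\in X}$ consists of scalar multiples of a complete system of pairwise orthogonal \emph{rank-one} idempotents whose diagonal parts are exactly the standard idempotents $e_w$, $w\in X$, and then to exhibit a single invertible $\bt$ that simultaneously diagonalises the whole system. First I would record the elementary structural facts. By \cref{vf=c.psi} we have $\vf(e_x)=\pm e_y$, so I put $\af_x=\pm1$ and $\lb(x)=y$; this already gives $\af_x=1\iff\vf(e_x)=e_y$ and $\af_x=-1\iff\vf(e_x)=-e_y$. For $z\ne x$ write $\vf(e_z)=c\,\psi(e_z)$ with $c=\vf(e_{X\sm\{x\}})\in I(X\sm\{y\},F)^*$ and $\psi$ the monomorphism of \cref{vf=c.psi}. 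Cancelling $c$ in $c=\vf(e_{X\sm\{x\}})=c\,\psi(e_{X\sm\{x\}})$ shows $\psi$ is unital, so the idempotents $f_z:=\psi(e_z)$, $z\ne x$, are pairwise orthogonal and sum to $e_{X\sm\{y\}}$.

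The heart of the argument is a rank/diagonal count, which I expect to be the main obstacle. Since the $f_z$ ($z\ne x$) are $|X\sm\{x\}|=|X\sm\{y\}|$ nonzero pairwise orthogonal idempotents summing to $e_{X\sm\{y\}}$, and the rank of a sum of orthogonal idempotents equals the sum of their ranks while $e_{X\sm\{y\}}$ has rank $|X\sm\{y\}|$, each $f_z$ has rank one; hence so does $\vf(e_z)=cf_z$, as $c$ is invertible. A rank-one element of $I(X,F)$, regarded as an upper-triangular matrix via a linear extension of $\le$, has at most one nonzero diagonal entry, since the $2\times2$ minor on any two diagonal positions is the product of the corresponding diagonal entries and must vanish. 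On the other hand, by \cref{properties-of-vf}\cref{vf(e_u)vf(e_v)=0} the $\vf(e_z)$ are pairwise orthogonal, so for each position $w$ at most one $z$ has $\vf(e_z)(w,w)\ne0$; and since $c=\sum_{z\ne x}\vf(e_z)$ is invertible in $I(X\sm\{y\},F)$, for every $w\ne y$ at least one such $z$ exists. Counting the diagonal positions in $X\sm\{y\}$ against the equinumerous indices $z\in X\sm\{x\}$, I conclude that each $\vf(e_z)$ has exactly one nonzero diagonal entry, located at a point $\lb(z)$, and that $\lb\colon X\sm\{x\}\to X\sm\{y\}$ is a bijection. Setting $\af_z:=\vf(e_z)(\lb(z),\lb(z))\in F^*$ and using that a rank-one matrix $M$ satisfies $M^2=\operatorname{tr}(M)M$ together with $\operatorname{tr}(\vf(e_z))=\af_z$, I obtain $\vf(e_z)^2=\af_z\vf(e_z)$, so that $q_z:=\af_z\m\vf(e_z)$ is a rank-one idempotent with $(q_z)_D=e_{\lb(z)}$.

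Finally I assemble the conjugator. Extending $\lb$ by $\lb(x)=y$ gives a bijection $\lb\colon X\to X$, and setting $q_x:=e_y=\af_x\m\vf(e_x)$ the elements $q_z$ ($z\in X$) are pairwise orthogonal idempotents (orthogonality involving $z=x$ follows from $\vf(e_z)\in I(X\sm\{y\},F)$ for $z\ne x$, which is orthogonal to $e_y$) with $(q_z)_D=e_{\lb(z)}$. Their sum is an idempotent whose diagonal part is $\sum_{z}e_{\lb(z)}=\dl$, hence, being invertible, it equals $\dl$. I then define $\bt:=\sum_{z\in X}e_{\lb(z)}q_z$. Since $(e_{\lb(z)}q_z)_D=e_{\lb(z)}$, one has $\bt_D=\dl$, so $\bt\in I(X,F)^*$; and using orthogonality together with $q_v^2=q_v$ and $e_ae_b=0$ for $a\ne b$, both $\bt q_v$ and $e_{\lb(v)}\bt$ reduce to $e_{\lb(v)}q_v$, whence $\bt q_v\bt\m=e_{\lb(v)}$ for every $v$.

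Multiplying through by $\af_z$ gives $\bt\vf(e_z)\bt\m=\af_z e_{\lb(z)}$ for all $z\in X$, which is \cref{bt.vf(e_z).bt-inv=af_z.e_lb(z)}. Moreover $q_x=e_y$ and $\lb(x)=y$ yield $\bt e_y\bt\m=e_y$, which is \cref{bt.e_y.bt-inv=e_y}, and the equivalences $\af_x=\pm1\iff\vf(e_x)=\pm e_y$ were recorded at the outset, completing the proof.
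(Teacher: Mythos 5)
Your proposal is correct, and while it shares the paper's overall skeleton -- reduce via \cref{vf=c.psi}, use $|X\sm\{x\}|=|X\sm\{y\}|$ to extract a bijection $\lb$, then conjugate a complete orthogonal system of idempotents onto the standard ones -- the middle of your argument is genuinely different. The paper conjugates first and normalizes afterwards: it notes that the orthogonal idempotents $\psi(e_z)$ have singleton diagonal parts $e_{\lb'(z)}$ by a pure counting argument on their disjoint nonempty diagonal supports, invokes \cite[Lemma 5.4]{GK} to get $\bt'$ with $\bt'\psi(e_z)\bt'\m=e_{\lb'(z)}$, and only then extracts the scalars by proving that $\gm=\bt'c\bt'\m$ is diagonal, using the relations of \cref{properties-of-vf}\cref{vf(e_u)vf(e_v)=0}. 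You normalize first and conjugate afterwards: you prove $\psi$ is unital (a step the paper never needs), deduce from rank additivity of orthogonal idempotents that each $\vf(e_z)$ is rank one, read off $\af_z$ as a trace via $M^2=\operatorname{tr}(M)M$, so that $q_z=\af_z\m\vf(e_z)$ is an idempotent with $(q_z)_D=e_{\lb(z)}$, and then re-prove the conjugation lemma inline with the explicit $\bt=\sum_{z}e_{\lb(z)}q_z$ (which is precisely the standard proof of the cited \cite[Lemma 5.4]{GK}). Your route buys self-containedness and the attractive intermediate fact that each $\vf(e_z)$ is visibly a scalar multiple of an idempotent \emph{before} any conjugation, which lets you build one global $\bt$ directly instead of the paper's extension $\bt=e_y+\bt'$; the cost is the excursion through matrix rank via a linear extension of $\le$, where one small gloss should be patched: $c$ is invertible only in $I(X\sm\{y\},F)$, not in $I(X,F)$, so to conclude that $cf_z$ has rank one as a matrix you should pass to $c+e_y\in I(X,F)^*$ and note $(c+e_y)f_z=cf_z$ because $e_yf_z=0$. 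That is a one-line fix, not a gap, and all the remaining verifications (orthogonality of $q_x=e_y$ with the other $q_z$ via \cref{vf(I(X-minus-x))-sst-I(X-minus-y)}, $\bt_D=\dl$ hence $\bt\in I(X,F)^*$, and $\bt q_v=e_{\lb(v)}q_v=e_{\lb(v)}\bt$) are sound.
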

	
	\begin{proof}
		By \cref{vf=c.psi} for any $z\in X\setminus\{x\}$ we have $\vf(e_z)=c\psi(e_z)$, where $c\in I(X\setminus\{y\},F)^*$ and $\psi$ is a monomorphism $\psi:I(X\setminus\{x\},F)\to I(X\setminus\{y\},F)$. Since $\{\psi(e_z)\}_{z\in X\setminus\{x\}}$ is a set of orthogonal idempotents and $|X\setminus\{x\}|=|X\setminus\{y\}|$, there exists a bijection $\lb':X\setminus\{x\}\to X\setminus\{y\}$ such that $\psi(e_z)_D=e_{\lb'(z)}$. Hence, by \cite[Lemma 5.4]{GK} there is $\beta'\in I(X\setminus\{y\},F)^*$ such that $\bt'\psi(e_z)\bt'\m= e_{\lb'(z)}$ for all $z\in X\setminus\{x\}$. Then 
		\begin{align}\label{bt'.vf(e_z).bt-inv=gm.e_lb'(z)}
			\bt'\vf(e_z)\bt'\m=\bt'c\psi(e_z)\bt'\m =\bt'c\bt'\m e_{\lb'(z)}.
		\end{align}
		Denote $\gm=\bt'c\bt'\m\in I(X\setminus\{y\},F)^*$. Let $u<v$ in $X\sm\{y\}$ and $a,b\in X\sm\{x\}$ such that $\lb'(a)=u$ and $\lb'(b)=v$. Then it follows from \cref{properties-of-vf}\cref{vf(e_u)vf(e_v)=0} and \cref{bt'.vf(e_z).bt-inv=gm.e_lb'(z)} that
		\begin{align*}
			0=\vf(e_a)\vf(e_b)=\gm e_u\gm e_v=\gm\cdot\gm(u,v) e_{uv}.
		\end{align*}
		Since $\gm$ is invertible in $I(X\setminus\{y\},F)$, then $\gm(u,v)e_{uv}=0$, whence $\gm(u,v)=0$. Thus, $\gm$ is diagonal and we define $\af_z=\gm(z,z)\in F^*$ for all $z\in X\sm\{x\}$, so that 
		\begin{align}\label{bt'.vf(e_z).bt-inv=af_z.e_lb'(z)}
			\bt'\vf(e_z)\bt'\m=\af_z e_{\lb'(z)}
		\end{align}
		for such $z$.
		
		We now extend $\lb'$ to a bijection $\lb:X\to X$ by $\lb(x)=y$ and define $\bt=e_y+\bt'$, 
		\begin{align*}
			\af_x=\begin{cases}
				1, & \vf(e_x)=e_y,\\
				-1, & \vf(e_x)=-e_y.
			\end{cases}
		\end{align*}
		Then $\bt\in I(X,F)^*$ and using \cref{bt'.vf(e_z).bt-inv=af_z.e_lb'(z)} we obtain
		\begin{align*}
			\bt \vf(e_z) \bt\m=(e_y+\bt')\vf(e_z)(e_y+\bt'\m)=\bt'\vf(e_z)\bt'\m=\af_z e_{\lb'(z)}
			=\af_z e_{\lb(z)}
		\end{align*}
		for all $z\in X\sm\{x\}$ and
		\begin{align*} 
			\bt \vf(e_x) \bt\m=(e_y+\bt')(\pm e_y)(e_y+\bt'\m)=\pm e_y=\af_x e_{\lb(x)}.
		\end{align*}
		So, \cref{bt.vf(e_z).bt-inv=af_z.e_lb(z)} holds for all $z\in X$. Moreover,
		\begin{align*}
			\bt e_y\bt\m=(e_y+\bt')e_y(e_y+\bt'\m)=e_y,
		\end{align*}
		proving \cref{bt.e_y.bt-inv=e_y}.
	\end{proof}
	
	Observe from \cref{lem conjugate T,fg=e_x=>vf(f)vf(g)=e_y,bt.vf(e_z).bt-inv=af_z.e_lb(z)} that if $f g=e_x$ then 
	\begin{align*}
		\beta \vf(f) \beta^{-1}\beta \vf(g) \beta^{-1}=\beta \vf(f)\vf(g) \beta^{-1}=\beta e_y \beta^{-1}=e_y,   
	\end{align*}
	so, without loss of generality, we may assume that, for all $z\in X$,
	\begin{align}\label{vf(e_z)=af_ze_lb(z)}
		\vf(e_z)=\af_z e_{\lb(z)},
	\end{align}
	where $\af_z\in F^*$, $\af_x=1\iff \vf(e_x)=e_y$, $\af_x=-1\iff \vf(e_x)=-e_y$ and $\lb:X\to X$ is a bijection with $\lb(x)=y$.  
	
	\begin{lem}\label{lambda-auto}
		The bijection $\lb$ is an automorphism of $X$.
	\end{lem}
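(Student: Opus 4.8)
The plan is to show that $\lb$ is strictly order-preserving, i.e. $u<v\impl\lb(u)<\lb(v)$ for all comparable $u,v\in X$. Since $X$ is finite and $\lb$ is a bijection, this already forces $\lb$ to be an order automorphism: a strictly order-preserving bijection permutes the finite set of comparable ordered pairs, so its inverse is order-preserving as well, exactly as noted in the footnote. Because any comparable pair $u<v$ has at most one of its members equal to $x$, I would split the verification into two cases, the pairs contained in $X\sm\{x\}$ and the pairs having $x$ as an endpoint.

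For $u<v$ with $u,v\in X\sm\{x\}$ I would use the factorization from \cref{vf=c.psi}: writing $\vf(f)=c\psi(f)$ with $c\in I(X\sm\{y\},F)^*$ and $\psi$ a monomorphism, \cref{vf(e_z)=af_ze_lb(z)} gives $\psi(e_z)=\af_z c\m e_{\lb(z)}$ for $z\in X\sm\{x\}$. Since $e_ue_{uv}e_v=e_{uv}$ and $\psi$ is multiplicative and injective, $\psi(e_{uv})=\psi(e_u)\psi(e_{uv})\psi(e_v)$ is a \emph{nonzero} element of the corner $\psi(e_u)\,I(X\sm\{y\},F)\,\psi(e_v)$. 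Abbreviating $R=I(X\sm\{y\},F)$ and using $Rc\m=R$, this corner equals $\af_u\af_v\,c\m\,e_{\lb(u)}Rc\m e_{\lb(v)}=\af_u\af_v\,c\m\,e_{\lb(u)}Re_{\lb(v)}$, and $e_{\lb(u)}Re_{\lb(v)}=Fe_{\lb(u)\lb(v)}$ when $\lb(u)\le\lb(v)$ and $0$ otherwise. As the corner is nonzero, $\lb(u)\le\lb(v)$, and since $\lb$ is injective, $\lb(u)<\lb(v)$.

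For a pair with an endpoint equal to $x$, say $u<x$ (the case $x<v$ being symmetric), I would pin down the support of $\vf(e_{ux})$ directly from \cref{properties-of-vf} and \cref{vf(e_z)=af_ze_lb(z)}. Indeed \cref{properties-of-vf}\cref{vf(e_z)vf(e_ux)=0} gives $e_{\lb(z)}\vf(e_{ux})=0$ for every $z\ne u$, so $\vf(e_{ux})$ is supported on the single row $\lb(u)$, while \cref{properties-of-vf}\cref{vf(e_ux)vf(e_z)=0} gives $\vf(e_{ux})e_{\lb(z)}=0$ for every $z\notin\{u,x\}$, confining its columns to $\lb(u)$ and $y=\lb(x)$. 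Hence $\vf(e_{ux})=p\,e_{\lb(u)}+q\,e_{\lb(u)y}$, the last summand being absent unless $\lb(u)\le y$. Now $e_u$ and $e_{ux}$ are linearly independent, so $\vf(e_u)=\af_u e_{\lb(u)}$ and $\vf(e_{ux})$ are linearly independent; this forces $q\ne0$, whence $e_{\lb(u)y}$ is a genuine basis vector, i.e. $\lb(u)\le y$, and $\lb(u)<y=\lb(x)$ since $\lb(u)\ne y$. The symmetric case uses \cref{properties-of-vf}\cref{vf(e_xv)vf(e_z)=0,vf(e_z)vf(e_xv)=0} to get $\vf(e_{xv})=p'e_{\lb(v)}+q'e_{y\lb(v)}$ and the same independence argument, yielding $y=\lb(x)<\lb(v)$.

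The main obstacle, I expect, is that the clean corner argument via $\psi$ only controls pairs inside $X\sm\{x\}$; the pairs meeting $x$ are invisible to $\psi$ and must be handled separately. The key realization making that tractable is that the orthogonality relations of \cref{properties-of-vf} squeeze $\vf(e_{ux})$ (resp. $\vf(e_{xv})$) into a single row and two columns (resp. a single column and two rows), after which injectivity alone forces the off-diagonal entry at $(\lb(u),y)$ (resp. $(y,\lb(v))$) to be nonzero, and this nonvanishing is precisely comparability in $X$.
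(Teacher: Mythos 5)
Your proof is correct, and it follows the paper's overall skeleton --- treat pairs inside $X\sm\{x\}$ via the factorization $\vf=c\psi$ from \cref{vf=c.psi}, treat pairs with endpoint $x$ via the orthogonality relations of \cref{properties-of-vf}, then invoke finiteness of $X$ --- but it replaces the paper's key step for the pairs meeting $x$ by a more elementary one. The paper derives the same support decomposition as you (your $\vf(e_{ux})=p\,e_{\lb(u)}+q\,e_{\lb(u)y}$ is exactly its \cref{vf(e_ux)=vf(e_ux)(lb(u)lb(u))e_lb(u)+e_lb(u)vf(e_ux)e_lb(x)}), but to rule out $q=0$ it invokes the quadratic identity \cref{properties-of-vf}\cref{r(vf(e_ux)vf(e_x)-vf(e_u)vf(e_ux))-r^2vf(e_ux)^2=0} (coming from the product $(e_x+e_u+re_{ux})(e_x-re_{ux})=e_x$), pins down the scalar $\vf(e_{ux})(\lb(u),\lb(u))=-\af_u$, and gets $\vf(e_{ux})=\vf(-e_u)$, contradicting injectivity. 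You observe that no scalar needs to be computed at all: if $q=0$ then $\vf(e_{ux})\in F\vf(e_u)$, which already contradicts the fact that the injective linear map $\vf$ preserves the linear independence of $\{e_u,e_{ux}\}$. This shaves item \cref{r(vf(e_ux)vf(e_x)-vf(e_u)vf(e_ux))-r^2vf(e_ux)^2=0} out of the argument entirely; the only thing the paper's longer route buys is the intermediate relation \cref{0=af_xe_lb(u)vf(e_ux)e_lb(x)-(af_u+vf(e_ux)(lb(u)lb(u)))vf(e_ux)}, which it recycles afterwards in \cref{af_u=af_x-and-af_v=af_x} to obtain $\af_u=\af_x$ and \cref{vf(e_ux)=vf(e_ux)(lb(u)lb(x))e_lb(u)lb(x)}, so with your proof that later lemma would have to rederive it. In the first case your corner computation, namely that $\psi(e_u)R\psi(e_v)=c\m e_{\lb(u)}Re_{\lb(v)}$ up to nonzero scalars where $R:=I(X\sm\{y\},F)$ and $Rc\m=R$, is a mild repackaging of the paper's direct computation $0\ne\vf(e_{uv})=ce_{\lb(u)}\psi(e_{uv})e_{\lb(v)}$; yours has the small advantage of working with $\psi(e_z)=\af_z c\m e_{\lb(z)}$ directly, without needing the (true, but unstated in the paper) observation that after the normalization \cref{vf(e_z)=af_ze_lb(z)} the element $c=\vf(e_{X\sm\{x\}})$ is diagonal, so that $\psi(e_z)=e_{\lb(z)}$ on the nose.
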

	\begin{proof}
		Let $u,v\in X\setminus\{x\}$ with $u\le v$. Then $e_u,e_v,e_{uv}\in I(X\setminus\{x\},F)$. By \cref{vf=c.psi,vf(e_z)=af_ze_lb(z)}
		\begin{align*}
			0\ne \vf(e_{uv})=c\psi(e_{uv})=c\psi(e_ue_{uv}e_v)=c\psi(e_u)\psi(e_{uv})\psi(e_v)=ce_{\lb(u)}\psi(e_{uv})e_{\lb(v)},
		\end{align*}
		whence $\lb(u)\le\lb(v)$. So, $\lb|_{X\setminus\{x\}}$ is order-preserving.
		
		Now let $u<x$. By \cref{properties-of-vf}\cref{vf(e_z)vf(e_ux)=0} and \cref{vf(e_z)=af_ze_lb(z)} for any $z\ne u$ we have
		\begin{align*}
			0=\vf(e_z)\vf(e_{ux})=\af_z e_{\lb(z)}\vf(e_{ux}),
		\end{align*}
		so $e_{\lb(z)}\vf(e_{ux})=0$ for all $z\ne u$. It follows that 
		\begin{align}\label{vf(e_ux)=e_lb(u)vf(e_ux)}
			\vf(e_{ux})=\dl\cdot\vf(e_{ux})=\left(\sum_{z\in X}e_{\lb(z)}\right)\vf(e_{ux})=e_{\lb(u)}\vf(e_{ux}).
		\end{align}
		
		Now by \cref{vf(e_z)=af_ze_lb(z),vf(e_ux)=e_lb(u)vf(e_ux)} and \cref{properties-of-vf}\cref{r(vf(e_ux)vf(e_x)-vf(e_u)vf(e_ux))-r^2vf(e_ux)^2=0} (with $r=1$) we have
		\begin{align}
			0&=\vf(e_{ux})\af_x e_{\lb(x)}-\af_u e_{\lb(u)}\vf(e_{ux})-e_{\lb(u)}\vf(e_{ux})e_{\lb(u)}\vf(e_{ux})\notag\\
			&=e_{\lb(u)}\vf(e_{ux})\af_x e_{\lb(x)}-\af_u\vf(e_{ux})-\vf(e_{ux})(\lb(u),\lb(u))e_{\lb(u)}\vf(e_{ux})\notag\\
			&=\af_xe_{\lb(u)}\vf(e_{ux}) e_{\lb(x)}-(\af_u+\vf(e_{ux})(\lb(u),\lb(u)))\vf(e_{ux}).\label{0=af_xe_lb(u)vf(e_ux)e_lb(x)-(af_u+vf(e_ux)(lb(u)lb(u)))vf(e_ux)}
		\end{align}
		On the other hand, by \cref{vf(e_z)=af_ze_lb(z)} and \cref{properties-of-vf}\cref{vf(e_ux)vf(e_z)=0} we have $\vf(e_{ux})e_{\lb(z)}=0$ for all $z\not\in\{u,x\}$, so combining this with \cref{vf(e_ux)=e_lb(u)vf(e_ux)} we get
		\begin{align}
			\vf(e_{ux})&=\vf(e_{ux})\cdot\dl=\vf(e_{ux})\cdot\left(\sum_{z\in X}e_{\lb(z)}\right)=\vf(e_{ux})e_{\lb(u)}+\vf(e_{ux})e_{\lb(x)}\notag\\
			&=e_{\lb(u)}\vf(e_{ux})e_{\lb(u)}+e_{\lb(u)}\vf(e_{ux})e_{\lb(x)}\notag\\
			&=\vf(e_{ux})(\lb(u),\lb(u))e_{\lb(u)}+e_{\lb(u)}\vf(e_{ux})e_{\lb(x)}.\label{vf(e_ux)=vf(e_ux)(lb(u)lb(u))e_lb(u)+e_lb(u)vf(e_ux)e_lb(x)}
		\end{align}
		Suppose that $e_{\lb(u)}\vf(e_{ux})e_{\lb(x)}=0$. Then thanks to \cref{0=af_xe_lb(u)vf(e_ux)e_lb(x)-(af_u+vf(e_ux)(lb(u)lb(u)))vf(e_ux)} and $\vf(e_{ux})\ne 0$ we obtain $\vf(e_{ux})(\lb(u),\lb(u))=-\af_u$. Therefore, \cref{vf(e_ux)=vf(e_ux)(lb(u)lb(u))e_lb(u)+e_lb(u)vf(e_ux)e_lb(x),vf(e_z)=af_ze_lb(z)} give $\vf(e_{ux})=-\af_ue_{\lb(u)}=-\vf(e_u)=\vf(-e_u)$, whence $e_{ux}=-e_u$, a contradiction. Thus, $e_{\lb(u)}\vf(e_{ux})e_{\lb(x)}\ne 0$, i.e. $\lb(u)<\lb(x)$ and $\vf(e_{ux})(\lb(u),\lb(x))\ne 0$.
		
		Similarly one proves that $\lb(x)<\lb(v)$ for all $x<v$.
		
		Thus, $\lb$ is order-preserving. Since $X$ is finite, $\lb\m$ is also order-preserving. Thus, $\lb$ is an automorphism of $X$.
		% 		\mk{Parece que no caso de $X$ finito se $\lb:X\to X$ é uma bijeção que preserva a ordem, então $\lb\m$ também preserva a ordem}	
		% 		It remains to prove that $\lb\m$ is order-preserving. Assume that $\lb(u)<\lb(x)$ for some $u\in X$. Then there exists $0\ne f\in I(X,F)$ such that $\vf(f)=e_{\lb(u)\lb(x)}$. In particular, $e_{\lb(u)}\vf(f)e_{\lb(x)}=\vf(f)\ne 0$. Write 
		% 		\begin{align*}
			% 			f=g+f(x,x)e_x+\sum_{a<x}f(a,x)e_{ax}+\sum_{x<b}f(x,b)e_{xb},
			% 		\end{align*}
		% 		where $g\in I(X\setminus\{x\},F)$. Since $\vf(g)\in I(X\setminus\{y\},F)$, then $e_{\lb(u)}\vf(g)e_{\lb(x)}=e_{\lb(u)}\vf(g)e_y=0$. Furthermore, $e_{\lb(u)}\vf(e_x)e_{\lb(x)}=\af_xe_{\lb(u)}e_{\lb(x)}=0$. Now, by \cref{af_xe_lb(x)vf(e_xv)e_lb(v)=af_vvf(e_xv)}
		% 		\begin{align*}
			% 			e_{\lb(u)}\vf(e_{xb})e_{\lb(x)}=\af\m_b\af_x e_{\lb(u)}e_{\lb(x)}\vf(e_{xb})e_{\lb(b)}e_{\lb(x)}=0,
			% 		\end{align*}
		% 		because $e_{\lb(u)}e_{\lb(x)}=0$. Since $e_{\lb(u)}\vf(f)e_{\lb(x)}\ne 0$, there must exist $a<x$ such that $e_{\lb(u)}\vf(e_{ax})e_{\lb(x)}\ne 0$. However, by \cref{af_xe_lb(u)vf(e_ux)e_lb(x)=af_uvf(e_ux)}
		% 		\begin{align*}
			% 			e_{\lb(u)}\vf(e_{ax})e_{\lb(x)}=\af\m_a\af_xe_{\lb(u)}e_{\lb(a)}\vf(e_{ax})e_{\lb(x)},
			% 		\end{align*}
		% 		which is zero whenever $a\ne u$. Thus, $u=a<x$. The proof that $\lb(x)<\lb(v)\impl x<v$ is similar. Thus, $\lb\m$ is also order-preserving.
	\end{proof}

	\begin{lem}\label{af_u=af_x-and-af_v=af_x}
		We have $\af_u=\af_x$ and
		\begin{align}\label{vf(e_ux)=vf(e_ux)(lb(u)lb(x))e_lb(u)lb(x)}
			\vf(e_{ux})=\vf(e_{ux})(\lb(u),\lb(x))e_{\lb(u)\lb(x)}
		\end{align}
		for all $u<x$. Similarly, $\af_v=\af_x$ and
		\begin{align}\label{vf(e_xv)=vf(e_xv)(lb(x)lb(v))e_lb(x)lb(v)}
			\vf(e_{xv})=\vf(e_{xv})(\lb(x),\lb(v))e_{\lb(x)\lb(v)}
		\end{align}
		for all $v>x$.
	\end{lem}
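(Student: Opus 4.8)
The plan is to treat the case $u<x$ in full and obtain the case $v>x$ by the dual argument, interchanging left and right multiplication and using \cref{properties-of-vf}\cref{r(vf(e_x)vf(e_xv)-vf(e_xv)vf(e_v))-r^2vf(e_xv)^2=0} together with the symmetric relations established in the proof of \cref{lambda-auto}. The starting point is that the support of $\vf(e_{ux})$ has already been localized in that proof: by \cref{vf(e_ux)=vf(e_ux)(lb(u)lb(u))e_lb(u)+e_lb(u)vf(e_ux)e_lb(x)} we may write $\vf(e_{ux})=d\,e_{\lb(u)}+p\,e_{\lb(u)\lb(x)}$, where $d:=\vf(e_{ux})(\lb(u),\lb(u))$ and $p:=\vf(e_{ux})(\lb(u),\lb(x))$, the latter being nonzero by the same proof. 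Establishing the lemma then reduces to showing $d=0$ and $\af_u=\af_x$, since in that case this two-term expression collapses precisely to \cref{vf(e_ux)=vf(e_ux)(lb(u)lb(x))e_lb(u)lb(x)}.

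The engine I would use is \cref{0=af_xe_lb(u)vf(e_ux)e_lb(x)-(af_u+vf(e_ux)(lb(u)lb(u)))vf(e_ux)} (equivalently, \cref{properties-of-vf}\cref{r(vf(e_ux)vf(e_x)-vf(e_u)vf(e_ux))-r^2vf(e_ux)^2=0} with $r=1$, after substituting $\vf(e_x)=\af_xe_y$ and $\vf(e_u)=\af_ue_{\lb(u)}$ from \cref{vf(e_z)=af_ze_lb(z)}). Into this identity I substitute the two-term expression for $\vf(e_{ux})$, noting that $e_{\lb(u)}\vf(e_{ux})e_{\lb(x)}=p\,e_{\lb(u)\lb(x)}$. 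Since $e_{\lb(u)}$ and $e_{\lb(u)\lb(x)}$ are distinct standard basis vectors, comparing coefficients is legitimate: the $e_{\lb(u)\lb(x)}$-component reads $\af_x p=(\af_u+d)p$, which, because $p\ne 0$, yields $\af_x=\af_u+d$, while the $e_{\lb(u)}$-component reads $(\af_u+d)d=0$.

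Combining the two relations gives $(\af_u+d)d=\af_x d=0$. The decisive point is that $\af_x\in\{1,-1\}$ by \cref{vf(e_z)=af_ze_lb(z)}, so $\af_x\ne 0$ and hence $d=0$; this forces $\af_u=\af_x$ and $\vf(e_{ux})=p\,e_{\lb(u)\lb(x)}=\vf(e_{ux})(\lb(u),\lb(x))e_{\lb(u)\lb(x)}$, which is exactly \cref{vf(e_ux)=vf(e_ux)(lb(u)lb(x))e_lb(u)lb(x)}. The case $v>x$ is identical after swapping the roles of the two multiplications.

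The substantial structural work --- confining $\vf(e_{ux})$ to $\Span\{e_{\lb(u)},e_{\lb(u)\lb(x)}\}$ and ensuring its off-diagonal entry $p$ is nonzero --- was already discharged in \cref{lambda-auto}, so no serious obstacle remains; the content here is only the small linear system in $d$ and $p$. The one subtlety worth flagging is that vanishing of the diagonal coefficient $d$ requires invoking $\af_x\ne 0$, not merely $p\ne 0$: it is the combination of both coefficient relations, rather than either one alone, that pins down $d=0$ and simultaneously delivers $\af_u=\af_x$.
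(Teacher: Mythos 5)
Your proof is correct and follows essentially the same route as the paper: both substitute the two-term decomposition $\vf(e_{ux})=\vf(e_{ux})(\lb(u),\lb(u))e_{\lb(u)}+\vf(e_{ux})(\lb(u),\lb(x))e_{\lb(u)\lb(x)}$ from \cref{vf(e_ux)=vf(e_ux)(lb(u)lb(u))e_lb(u)+e_lb(u)vf(e_ux)e_lb(x)} into \cref{0=af_xe_lb(u)vf(e_ux)e_lb(x)-(af_u+vf(e_ux)(lb(u)lb(u)))vf(e_ux)}, compare the coefficients of the two distinct basis vectors, and conclude via $\vf(e_{ux})(\lb(u),\lb(x))\ne 0$ and $\af_x\ne 0$. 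The only cosmetic difference is that the paper argues by contradiction on $\vf(e_{ux})(\lb(u),\lb(u))\ne 0$, while you solve the two coefficient relations directly; your explicit flagging of the role of $\af_x\ne 0$ is exactly the implicit step in the paper's ``so $\vf(e_{ux})(\lb(u),\lb(x))=0$''.
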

	\begin{proof}
		Substituting the right-hand side of \cref{vf(e_ux)=vf(e_ux)(lb(u)lb(u))e_lb(u)+e_lb(u)vf(e_ux)e_lb(x)} to \cref{0=af_xe_lb(u)vf(e_ux)e_lb(x)-(af_u+vf(e_ux)(lb(u)lb(u)))vf(e_ux)}, we obtain
		\begin{align*}
			0&=(\af_x-\af_u-\vf(e_{ux})(\lb(u),\lb(u)))\vf(e_{ux})(\lb(u),\lb(x))e_{\lb(u)\lb(x)}\\
			&\quad-(\af_u+\vf(e_{ux})(\lb(u),\lb(u)))\vf(e_{ux})(\lb(u),\lb(u))e_{\lb(u)}.
		\end{align*}
		If $\vf(e_{ux})(\lb(u),\lb(u))\ne 0$, then $\af_u+\vf(e_{ux})(\lb(u),\lb(u))=0$, so $\vf(e_{ux})(\lb(u),\lb(x))=0$, a contradiction with $e_{\lb(u)}\vf(e_{ux})e_{\lb(x)}\ne 0$ proved in \cref{lambda-auto}. Consequently, $\vf(e_{ux})(\lb(u),\lb(u))=0$ and $\af_u=\af_x$. Then \cref{vf(e_ux)=vf(e_ux)(lb(u)lb(x))e_lb(u)lb(x)} follows from \cref{vf(e_ux)=vf(e_ux)(lb(u)lb(u))e_lb(u)+e_lb(u)vf(e_ux)e_lb(x)}. The proof of $\af_v=\af_x$ and \cref{vf(e_xv)=vf(e_xv)(lb(x)lb(v))e_lb(x)lb(v)} for all $v>x$ is similar.
	\end{proof}  
	
	\begin{cor}\label{vf(J(I(X_F)))=J(I(X_F))}
		We have $\vf(J(I(X,F))=J(I(X,F))$.
	\end{cor}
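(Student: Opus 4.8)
The plan is to prove the two inclusions $\vf(J(I(X,F)))\sst J(I(X,F))$ and $J(I(X,F))\sst\vf(J(I(X,F)))$ separately. The forward inclusion is the substantive one and is handled by a case analysis on the standard basis, while the reverse inclusion will come for free from finite-dimensionality.

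For the forward inclusion I would use that $J(I(X,F))$ has $\B=\{e_{uv}\mid u<v\}$ as a basis, so by linearity it suffices to check that $\vf(e_{uv})\in J(I(X,F))$ for every $u<v$. I split the pairs $(u,v)$ into three mutually exclusive cases. If $u\ne x$ and $v\ne x$, then $e_{uv}\in J(I(X\setminus\{x\},F))$ and \cref{vf(J(I(X-minus-x)))-sst-J(I(X-minus-y))} already gives $\vf(e_{uv})\in J(I(X\setminus\{y\},F))\sst J(I(X,F))$. If $v=x$ (hence $u<x$), then \cref{vf(e_ux)=vf(e_ux)(lb(u)lb(x))e_lb(u)lb(x)} shows that $\vf(e_{ux})$ is a scalar multiple of $e_{\lb(u)\lb(x)}=e_{\lb(u)y}$; since $\lb$ is an automorphism of $X$ by \cref{lambda-auto} and $u<x$, we have $\lb(u)<\lb(x)=y$, so $e_{\lb(u)y}\in J(I(X,F))$. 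The case $u=x$ (hence $v>x$) is symmetric via \cref{vf(e_xv)=vf(e_xv)(lb(x)lb(v))e_lb(x)lb(v)}, giving $\vf(e_{xv})$ proportional to $e_{\lb(x)\lb(v)}=e_{y\lb(v)}$ with $y=\lb(x)<\lb(v)$. Together these three cases exhaust all basis elements and yield $\vf(J(I(X,F)))\sst J(I(X,F))$.

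For the reverse inclusion I would simply observe that $\vf$ is bijective, hence injective, so its restriction to $J(I(X,F))$ is an injective linear map $J(I(X,F))\to J(I(X,F))$, using the inclusion just proved. As $X$ is finite, $J(I(X,F))$ is finite-dimensional, and an injective linear endomorphism of a finite-dimensional vector space is automatically surjective; thus $\vf(J(I(X,F)))=J(I(X,F))$. I expect no real obstacle here: all the hard work has already been done in \cref{lambda-auto,af_u=af_x-and-af_v=af_x}, and the only point requiring care is organising the case analysis so that it exhausts every pair $u<v$, in particular noting that $u=v=x$ cannot occur.
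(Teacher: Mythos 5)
Your proof is correct and is essentially the paper's own argument in expanded form: the paper likewise deduces the inclusion $\vf(J(I(X,F)))\sst J(I(X,F))$ by combining \cref{af_u=af_x-and-af_v=af_x} (for the basis elements $e_{ux}$, $e_{xv}$ with $u<x<v$) with \cref{vf(J(I(X-minus-x)))-sst-J(I(X-minus-y))} (for $u,v\ne x$), and then upgrades to equality via injectivity of $\vf$ on the finite-dimensional space $J(I(X,F))$. Your write-up merely makes explicit the case split and the dimension count that the paper leaves implicit.
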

	\begin{proof}
		Observe that $\vf(e_{ux}),\vf(e_{xv})\in J(I(X,F))$ for all $u<x<v$ by \cref{af_u=af_x-and-af_v=af_x}. Together with \cref{vf(J(I(X-minus-x)))-sst-J(I(X-minus-y))} this results in $\vf(J(I(X,F)))\sst J(I(X,F))$. Since $\vf$ is injective, we have $\vf(J(I(X,F)))=J(I(X,F))$.
	\end{proof}
	
	Let $\widehat\lb$ be the automorphism of $I(X,F)$ induced by $\lb$ as in \cref{hat-lb(e_xy)=e_lb(x)lb(y)}. Replacing $\vf$ by $(\widehat\lb)\m\circ\vf$, we may assume that $x=y$ and, for all $z\in X$,
	\begin{align}\label{vf(e_z)=gm.e_z}
		\vf(e_z)=\af_z e_z,
	\end{align}
	where $\af_z\in F^*$ with $\alpha_x=1\iff \vf(e_x)=e_x$ and $\alpha_x=-1\iff \vf(e_x)=-e_x$.  
	
	\begin{lem}\label{vf(e_uv)-multiple-e_uv}
		For all $u<v$ in $X$ we have 
		\begin{align}\label{vf(e_uv)=sg(u_v)e_uv}
			\vf(e_{uv})=\sg(u,v) e_{uv}
		\end{align}
		for some $\sg(u,v)\in F^*$.
	\end{lem}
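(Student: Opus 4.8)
The plan is to split the edges $(u,v)$ according to whether or not they are incident to the distinguished vertex $x$, since the machinery already assembled handles the two cases by different means. An edge incident to $x$ — that is, $e_{ux}$ with $u<x$ or $e_{xv}$ with $x<v$ — is already in the desired shape: after the reduction to $\lb=\id$ and $y=x$, \cref{af_u=af_x-and-af_v=af_x} reads $\vf(e_{ux})=\vf(e_{ux})(u,x)e_{ux}$ and $\vf(e_{xv})=\vf(e_{xv})(x,v)e_{xv}$, and the coefficients are nonzero because $\vf$ is injective. So it remains to treat an edge $(u,v)$ with $u,v\in X\sm\{x\}$.

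For such an edge I would pin down $\vf(e_{uv})$ by squeezing it between the diagonal idempotents $e_u$ and $e_v$, showing $\vf(e_{uv})=e_u\vf(e_{uv})e_v$. First I would annihilate every row but the $u$-th: for $a\in X\sm\{x\}$ with $a\ne u$ one has $e_ae_{uv}=0$ inside $I(X\sm\{x\},F)$, so \cref{vf(I(X-minus-x))-sst-I(X-minus-y)} yields $\vf(e_a)\vf(e_{uv})=0$, and since $\vf(e_a)=\af_ae_a$ with $\af_a\ne 0$ by \cref{vf(e_z)=gm.e_z}, this gives $e_a\vf(e_{uv})=0$. The one remaining index $a=x$ is covered by $\vf(e_x)\vf(e_{uv})=0$ from \cref{properties-of-vf}\cref{vf(e_x)vf(e_uv)=0} (applicable since $u\ne x$) together with $\vf(e_x)=\af_xe_x$. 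Summing over all $a\ne u$ and using $\dl=\sum_a e_a$ gives $\vf(e_{uv})=e_u\vf(e_{uv})$.

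Symmetrically, right multiplication by $e_b$ annihilates $\vf(e_{uv})$ for every $b\ne v$: the case $b\in X\sm\{x\}$ again comes from zero-product preservation on $I(X\sm\{x\},F)$, and the case $b=x$ from \cref{properties-of-vf}\cref{vf(e_uv)vf(e_x)=0} (applicable since $v\ne x$). Hence $\vf(e_{uv})=\vf(e_{uv})e_v$, and combining the two conclusions, $\vf(e_{uv})=e_u\vf(e_{uv})e_v=\vf(e_{uv})(u,v)e_{uv}$. Setting $\sg(u,v):=\vf(e_{uv})(u,v)$ gives the claimed form, and $\sg(u,v)\ne 0$ because $\vf(e_{uv})\ne 0$ by injectivity of $\vf$.

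The only delicate point is the bookkeeping around the distinguished index $x$. Since $e_{uv}$ with $u,v\ne x$ lies in $I(X\sm\{x\},F)$ while the diagonal idempotent $e_x$ does not, the zero-product preservation of \cref{vf(I(X-minus-x))-sst-I(X-minus-y)} does not by itself kill the $x$-row and the $x$-column of $\vf(e_{uv})$; this is exactly what the two auxiliary relations in \cref{properties-of-vf} are for. I expect no genuine obstacle beyond making sure that every diagonal index — and $x$ in particular — is accounted for in both the left and the right multiplication, and beyond separating off the edges incident to $x$, which are already resolved by \cref{af_u=af_x-and-af_v=af_x}.
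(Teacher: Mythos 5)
Your proof is correct, and for the generic edges it is the paper's own argument: for $u,v\in X\sm\{x\}$ you annihilate every row $a\ne u$ (via zero-product preservation on $I(X\sm\{x\},F)$ for $a\ne x$, and via \cref{properties-of-vf}\cref{vf(e_x)vf(e_uv)=0} for $a=x$) and every column $b\ne v$ symmetrically, then squeeze $\vf(e_{uv})=e_u\vf(e_{uv})e_v$, exactly as in the paper's Case 1. Where you genuinely diverge is in the edges through $x$: the paper treats these as separate Cases 2 and 3, using \cref{properties-of-vf}\cref{vf(e_xv)vf(e_z)=0} and \cref{properties-of-vf}\cref{vf(e_z)vf(e_xv)=0} together with \cref{vf(J(I(X_F)))=J(I(X_F))} to kill the stray diagonal term $e_v\vf(e_{xv})e_v$, whereas you observe that \cref{vf(e_ux)=vf(e_ux)(lb(u)lb(x))e_lb(u)lb(x),vf(e_xv)=vf(e_xv)(lb(x)lb(v))e_lb(x)lb(v)} from \cref{af_u=af_x-and-af_v=af_x} already give the claim once transported through the replacement $\vf\mapsto(\wht\lb)\m\circ\vf$. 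This shortcut is legitimate: $(\wht\lb)\m$ sends $e_{\lb(u)\lb(x)}$ to $e_{ux}$ while preserving the scalar coefficient, so the conclusion of \cref{af_u=af_x-and-af_v=af_x} does survive the normalization, and the coefficient is nonzero by injectivity of $\vf$ (or by the nonvanishing $\vf(e_{ux})(\lb(u),\lb(x))\ne 0$ already established in the proof of \cref{lambda-auto}). Your route makes the paper's Cases 2 and 3 --- and in particular the appeal to \cref{vf(J(I(X_F)))=J(I(X_F))} inside this lemma --- unnecessary; what the paper's version buys in exchange is self-containedness, re-establishing the $x$-incident cases by the same row/column-annihilation mechanism as Case 1 without relying on the bookkeeping of how earlier conclusions transform under composition with $(\wht\lb)\m$, a point your write-up correctly asserts but which is the one step a careful reader must verify.
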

	\begin{proof}
		It suffices to prove that $\vf(e_{uv})=e_u\vf(e_{uv})=\vf(e_{uv})e_v$ for all $u<v$, because in this case
		\begin{align*}
			\vf(e_{uv})=e_u\vf(e_{uv})=e_u\vf(e_{uv})e_v=\vf(e_{uv})(u,v)e_{uv},
		\end{align*}
		so we define $\sg(u,v)=\vf(e_{uv})(u,v)\in F^*$.
		
		\textit{Case 1.} $u,v\in X\sm\{x\}$. Then $\vf(e_x)\vf(e_{uv})=0$ by \cref{properties-of-vf}\cref{vf(e_x)vf(e_uv)=0}, which implies $e_x\vf(e_{uv})=0$ by \cref{vf(e_z)=gm.e_z}. Similarly, $\vf(e_z)\vf(e_{uv})=0$ for all $z\ne u,x$ by \cref{vf(I(X-minus-x))-sst-I(X-minus-y)}, so $e_z\vf(e_{uv})=0$ for all $z\ne u,x$. It follows that
		\begin{align*}
			\vf(e_{uv})=\dl\cdot\vf(e_{uv})=\left(\sum_{z\in X}e_z\right)\vf(e_{uv})=e_u\vf(e_{uv}).
		\end{align*}
		In a similar way one proves using \cref{properties-of-vf}\cref{vf(e_uv)vf(e_x)=0}, \cref{vf(I(X-minus-x))-sst-I(X-minus-y),vf(e_z)=gm.e_z} that $\vf(e_{uv})=\vf(e_{uv})e_v$, 
		
		\textit{Case 2.} $x=u<v$. Then $\vf(e_{uv})\vf(e_z)=\vf(e_{xv})\vf(e_z)=0$ for all $z\ne v$ by \cref{properties-of-vf}\cref{vf(e_xv)vf(e_z)=0}, whence $\vf(e_{uv})e_z=0$ for all $z\ne v$ in view of \cref{vf(e_z)=gm.e_z}. Therefore, 
		\begin{align}\label{vf(e_uv)=vf(e_uv)e_v}
			\vf(e_{uv})=\vf(e_{uv})\cdot \dl=\vf(e_{uv})e_v.
		\end{align}
		Now, $\vf(e_z)\vf(e_{uv})=\vf(e_z)\vf(e_{xv})=0$ for $z\ne x,v$ by \cref{properties-of-vf}\cref{vf(e_z)vf(e_xv)=0}, so $e_z\vf(e_{uv})=0$ for $z\ne u,v$ by \cref{vf(e_z)=gm.e_z}. Consequently, using \cref{vf(e_uv)=vf(e_uv)e_v,vf(J(I(X_F)))=J(I(X_F))}, we get
		\begin{align*}
			\vf(e_{uv})=\dl\cdot\vf(e_{uv})=e_u\vf(e_{uv})+e_v\vf(e_{uv})=e_u\vf(e_{uv})+e_v\vf(e_{uv})e_v=e_u\vf(e_{uv}),
		\end{align*}
		as needed.
		
		\textit{Case 3.} $u<v=x$. This case is similar to Case 2. 
	\end{proof}
	
	\begin{lem}\label{lem vf(1) central}
		Let $X$ be connected. Then $\vf(\dl)=\pm \delta$.
	\end{lem}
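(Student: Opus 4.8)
The plan is to show that all the scalars $\af_z$ appearing in \cref{vf(e_z)=gm.e_z} coincide and equal $\af_x=\pm1$. Since $\dl=\sum_{z\in X}e_z$ gives $\vf(\dl)=\sum_{z\in X}\af_z e_z$, this at once yields $\vf(\dl)=\af_x\dl=\pm\dl$. By \cref{af_u=af_x-and-af_v=af_x} we already know $\af_u=\af_x$ whenever $u$ is comparable with $x$, so the real content is to transfer the scalar across an arbitrary comparable pair that avoids $x$, after which connectedness of $X$ spreads the equality over all of $X$.

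Concretely, I would first fix $u<v$ in $X$ with $x\notin\{u,v\}$ and record the factorization
\[
(e_x+e_u+re_{uv})(e_x+e_v-re_{uv})=e_x
\]
valid for every $r\in F^*$: since $x,u,v$ are pairwise distinct and $u<v$, all cross terms vanish except $e_u(-re_{uv})=-re_{uv}$ and $re_{uv}e_v=re_{uv}$, which cancel. Applying the constant-product hypothesis (which, after the reduction preceding \cref{vf(e_z)=gm.e_z}, reads $fg=e_x\Rightarrow\vf(f)\vf(g)=e_x$) and substituting the normal forms $\vf(e_z)=\af_z e_z$ from \cref{vf(e_z)=gm.e_z} and $\vf(e_{uv})=\sg(u,v)e_{uv}$ from \cref{vf(e_uv)-multiple-e_uv}, I would expand
\[
(\af_x e_x+\af_u e_u+r\sg(u,v)e_{uv})(\af_x e_x+\af_v e_v-r\sg(u,v)e_{uv})=e_x.
\]
Using $\af_x^2=1$ (recall $\af_x=\pm1$), the left-hand side collapses to $e_x+r\sg(u,v)(\af_v-\af_u)e_{uv}$, and comparing the coefficients of $e_{uv}$ with $r,\sg(u,v)\in F^*$ forces $\af_u=\af_v$.

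Combining this with \cref{af_u=af_x-and-af_v=af_x}, the function $z\mapsto\af_z$ is constant on every comparable pair: for a pair involving $x$ apply \cref{af_u=af_x-and-af_v=af_x}, and otherwise apply the identity just derived. Finally I would invoke connectedness of $X$: for any $z\in X$ pick a walk $x=x_0,x_1,\dots,x_m=z$; consecutive terms are comparable, whence $\af_{x_i}=\af_{x_{i+1}}$ at each step and therefore $\af_z=\af_x$. Thus every $\af_z$ equals $\af_x\in\{1,-1\}$, and $\vf(\dl)=\af_x\dl=\pm\dl$. The only genuinely creative step is spotting the factorization of $e_x$ that couples $e_u$ and $e_v$ through $e_{uv}$; once it is available, both the scalar comparison and the walk argument are routine.
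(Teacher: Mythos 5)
Your proof is correct, and its global skeleton matches the paper's: reduce the claim to showing that $z\mapsto\af_z$ is constant, cover comparable pairs through $x$ by \cref{af_u=af_x-and-af_v=af_x}, and propagate the equality along walks via connectedness. The one step you handle differently is the key identity $\af_u=\af_v$ for $u<v$ with $x\notin\{u,v\}$. The paper obtains it by citing \cref{vf(I(X-minus-x))-sst-I(X-minus-y)} (the restriction of $\vf$ to $I(X\sm\{x\},F)$ is a zero product preserver) together with the identity \cref{vf(e_xz)_vf(e_zy)=const} from \cref{zp-on-the-basis}, which gives $\vf(e_u)\vf(e_{uv})=\vf(e_{uv})\vf(e_v)$ and hence $\af_u\sg(u,v)=\af_v\sg(u,v)$. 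You instead verify the explicit factorization $(e_x+e_u+re_{uv})(e_x+e_v-re_{uv})=e_x$ and apply the reduced hypothesis directly; your computation checks out (all cross terms vanish since $x,u,v$ are pairwise distinct, $e_{uv}^2=0$, and $\af_x^2=1$), and you are right that the hypothesis survives the normalizations, since conjugation by $\bt$ fixes $e_y$ by \cref{bt.e_y.bt-inv=e_y} and $(\wht\lb)\m$ carries $e_y$ to $e_x$. It is worth noting that your factorization at $r=1$ is precisely the composite of the paper's two ingredients: the zero product $(e_u+e_{uv})(e_v-e_{uv})=0$ underlying \cref{vf(e_xz)_vf(e_zy)=const}, padded by $e_x$ exactly as in the proof of \cref{vf(I(X-minus-x))-sst-I(X-minus-y)}. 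So the two arguments are at bottom the same computation; yours is self-contained at this point and avoids invoking \cref{zp-on-the-basis} altogether, while the paper's version reuses machinery it has already established and needs elsewhere, at the cost of routing through two prior results.
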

	\begin{proof}
		Using \cref{vf(e_z)=gm.e_z} we write
		\begin{align*}
			\vf(\dl)=\sum_{z\in X}\vf(e_z)=\sum_{z\in X}\af_z e_z.
		\end{align*}
		It suffices to show that $\af_z=\af_x$ for all $z\in X$. We know by \cref{af_u=af_x-and-af_v=af_x} that $\af_u=\af_x$ for all $u<x$ and $\af_v=\af_x$ for all $v>x$. Moreover, for all $u<v$, in view of \cref{vf(e_uv)-multiple-e_uv,vf(e_z)=gm.e_z} we obtain
		\begin{align*}
			\af_u\vf(e_{uv})=\sg(u,v)\af_u e_{uv}=\sg(u,v)\af_u e_u\cdot e_{uv}=\sg(u,v)\vf(e_u)\cdot e_{uv}=\vf(e_u)\vf(e_{uv}).
		\end{align*}
		Similarly,
		\begin{align*}
			\af_v\vf(e_{uv})=\sg(u,v) \af_v e_{uv}=\sg(u,v)e_{uv}\cdot \af_v e_v=\sg(u,v)e_{uv}\vf(e_v)=\vf(e_{uv})\vf(e_v).
		\end{align*}
		However, $\vf(e_u)\vf(e_{uv})=\vf(e_{uv})\vf(e_v)$ for all $u<v$ from $X\sm\{x\}$ by \cref{vf(I(X-minus-x))-sst-I(X-minus-y),zp-on-the-basis}. Hence, $\af_u=\af_v$. Since $X$ is connected, we are done.
	\end{proof}
	
	Replacing $\vf$ by $-\vf$, if necessary, we may assume that
	\begin{align}\label{vf(dl)=dl}
		\vf(\dl)=\dl.
	\end{align}
	Then, for all $z\in X$, we have
	\begin{align}\label{vf(e_z)=e_z}
		\vf(e_z)=e_z.
	\end{align}
	In particular, $c=\vf(e_{X\sm\{x\}})=e_{X\sm\{y\}}$, so $\vf=\psi$ on $I(X\sm\{x\},F)$. We will now show that these conditions guarantee that $\vf$ is a multiplicative automorphism \cref{M_sg(e_xy)=sg(x_y)e_xy} of $I(X,F)$.
	
	\begin{lem}\label{vf-mult-auto}
		Let $X$ be connected. Then $\vf$ is a multiplicative automorphism of $I(X,F)$.
	\end{lem}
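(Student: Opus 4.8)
The plan is to show that $\vf$ coincides with a multiplicative automorphism $M_\sg$ of the form \cref{M_sg(e_xy)=sg(x_y)e_xy}. By \cref{vf(e_uv)-multiple-e_uv} and \cref{vf(e_z)=e_z} we already know that $\vf(e_{uv})=\sg(u,v)e_{uv}$ for all $u<v$ and $\vf(e_z)=e_z$ for all $z\in X$. Extending $\sg$ to the whole of $X_\le^2$ by setting $\sg(z,z)=1\in F^*$, the map $\vf$ agrees with $M_\sg$ on the standard basis, so $\vf=M_\sg$ as linear maps. Hence it remains only to verify that $\sg$ satisfies the cocycle identity \cref{sg(x_y)sg(y_z)=sg(x_z)}. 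Because of the convention $\sg(z,z)=1$, this identity is automatic whenever two of the three arguments coincide, so I only need to treat strict chains $u<v<w$.

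For a strict chain $u<v<w$ contained in $X\sm\{x\}$ the identity is immediate: since $\vf=\psi$ on $I(X\sm\{x\},F)$ and $\psi$ is a homomorphism, we have $\sg(u,w)e_{uw}=\vf(e_{uw})=\vf(e_{uv}e_{vw})=\vf(e_{uv})\vf(e_{vw})=\sg(u,v)\sg(v,w)e_{uw}$, whence $\sg(u,w)=\sg(u,v)\sg(v,w)$.

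The remaining, and main, case is a strict chain $u<v<w$ one of whose elements equals $x$; here multiplicativity of $\vf$ is not available and must be extracted from product preservation \cref{fg=e_x=>vf(f)vf(g)=e_y}. I would work inside $I(\{u,v,w\},F)$, which is closed under multiplication in $I(X,F)$, and exhibit explicit $f,g$ supported on $\{u,v,w\}$ with $fg=e_x$ and $f(u,v)\ne0$, $g(v,w)\ne0$. For instance, when $v=x$ one may take $f=e_u+e_x+e_{ux}-e_{xw}$ and $g=e_x+e_w-e_{ux}+e_{xw}-e_{uw}$, for which one checks directly that $fg=e_x$; the cases $u=x$ and $w=x$ are handled by analogous elementary factorizations of $e_x$. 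Since $\vf$ preserves the span of $\{e_u,e_v,e_w,e_{uv},e_{vw},e_{uw}\}$, both $\vf(f)$ and $\vf(g)$ are again supported on this chain, and product preservation gives $\vf(f)\vf(g)=e_x$. Computing the $(u,w)$-coordinate of this identity, and using $(fg)(u,w)=e_x(u,w)=0$ to eliminate the terms $f(u,u)g(u,w)+f(u,w)g(w,w)$, I obtain
\begin{align*}
0=(\vf(f)\vf(g))(u,w)=f(u,v)g(v,w)\bigl(\sg(u,v)\sg(v,w)-\sg(u,w)\bigr),
\end{align*}
and since $f(u,v)g(v,w)\ne0$ the cocycle identity $\sg(u,v)\sg(v,w)=\sg(u,w)$ follows.

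Collecting the two cases, $\sg$ satisfies \cref{sg(x_y)sg(y_z)=sg(x_z)}, so $\vf=M_\sg$ is a multiplicative automorphism of $I(X,F)$. The main obstacle is precisely the chains through $x$: there the only available tool is the single factorization $fg=e_x$, so the crux is to manufacture such $f,g$ whose corner entries $f(u,v)$ and $g(v,w)$ are nonzero, which is exactly what lets the $(u,w)$-coordinate of $\vf(f)\vf(g)=e_x$ detect the discrepancy $\sg(u,v)\sg(v,w)-\sg(u,w)$.
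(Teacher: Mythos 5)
Your proof is correct and takes essentially the same route as the paper's: extend $\sg$ by $\sg(z,z)=1$, settle chains avoiding $x$ via the homomorphism $\psi$, and for chains through $x$ apply product preservation to explicit factorizations of $e_x$ — your factorization for $v=x$ indeed satisfies $fg=e_x$ with $f(u,x)=g(x,w)=1$, and the analogous factorizations you assert for $u=x$ and $w=x$ do exist (the paper uses $(e_x+e_{xv}-e_{xw})(\dl-e_{xv}+e_{vw})=e_x$ and its order-dual). Your unified $(u,w)$-coordinate formula, which reduces every case to exhibiting $f,g$ supported on the chain with $fg=e_x$ and $f(u,v)g(v,w)\ne 0$, is a mild streamlining of the paper's case-by-case expansions rather than a genuinely different method.
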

	\begin{proof}
		Consider $\sg:X^2_<\to F^*$ from \cref{vf(e_uv)-multiple-e_uv} and extend it to $\sg:X^2_\le\to F^*$ by means of $\sg(z,z)=1$ for all $z\in X$. Observe that \cref{vf(e_uv)=sg(u_v)e_uv} now holds for all $u\le v$ thanks to \cref{vf(e_z)=e_z}. Thus, it only remains to prove that $\sigma$ satisfies \cref{sg(x_y)sg(y_z)=sg(x_z)}. Take arbitrary $u\leq v \leq w$. We need to show that $\sg(u,v)\sg(v,w)=\sg(u,w)$. Since $\sg(z,z)=1$ for all $z\in X$, it suffices to assume $u<v<w$. Consider the following cases.
		
		{\it Case 1.} $x\notin \{u,v,w\}$. Then $\sigma(u,v)\sigma(v,w)=\sigma(u,w)$ because $\vf=\psi$ on $I(X\setminus\{x\},F)$ and $\psi(e_{uw})=\psi(e_{uv})\psi(e_{vw})$.
		
		{\it Case 2.} $x=u$. Since
		$(e_x+e_{xv}-e_{xw})(\delta-e_{xv}+e_{vw})=e_x$, by \cref{vf(dl)=dl,vf(e_uv)=sg(u_v)e_uv} we have
		\begin{align*}
			(e_x+\sg(x,v)e_{xv}-\sg(x,w)e_{xw})(\dl-\sg(x,v)e_{xv}+\sg(v,w)e_{vw})=e_x,   
		\end{align*}
		which gives $\sigma(x,v)\sigma(v,w)=\sg(x,w)$. 
		
		{\it Case 3.} $x=w$. This case is proved similarly to Case 2 using the product $(\delta-e_{vx}+e_{uv})(e_x+e_{vx}-e_{ux})=e_x$.
		
		{\it Case 4.} $x=v$. The result follows by applying $\vf$ to the product $(e_x+e_{ux}+e_{xw}+e_{uw}-e_u) (\delta-e_{xw}+e_{ux}-e_u)=e_x$.
	\end{proof}
	
	We can finally prove the main result of our work.
	\begin{thrm}\label{main-result}
		Let $X$ be a finite connected poset and $x,y\in X$. Let $\e,\eta\in I(X,F)$ be primitive idempotents such that $\e_D=e_x$ and $\eta_D=e_y$. There exists a bijective linear map $\vf:I(X,F)\to I(X,F)$ preserving products equal to $\e$ and $\eta$ if and only if there exists an automorphism of $X$ mapping $x$ to $y$, in which case $\vf$ is either an automorphism of $I(X,F)$ or the negative of an automorphism of $I(X,F)$. 
	\end{thrm}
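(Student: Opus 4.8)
The plan is to prove both implications by reducing, via \cite[Lemma 1]{Khripchenko-Novikov09}, to the normalized situation already analyzed in the preceding lemmas. Fix inner automorphisms $\psi_1,\psi_2$ of $I(X,F)$ with $\psi_1(e_x)=\e$ and $\psi_2(\eta)=e_y$, so that a bijective linear map $\vf$ preserves products equal to $\e$ and $\eta$ precisely when $\Phi:=\psi_2\circ\vf\circ\psi_1$ satisfies \cref{fg=e_x=>vf(f)vf(g)=e_y}; this is the same observation used at the start of the section. Everything then reduces to transferring statements back and forth across this conjugation.

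For the ``if'' direction, suppose $\lb\in\Aut(X)$ with $\lb(x)=y$. First I would set $\vf:=\psi_2^{-1}\circ\widehat\lb\circ\psi_1^{-1}$, where $\widehat\lb$ is the automorphism induced by $\lb$ as in \cref{hat-lb(e_xy)=e_lb(x)lb(y)}; being a composition of automorphisms, $\vf$ is a bijective linear automorphism of $I(X,F)$. To check it preserves products equal to $\e$ and $\eta$, assume $fg=\e$ and compute, using that $\psi_2^{-1},\widehat\lb,\psi_1^{-1}$ are all multiplicative, $\vf(f)\vf(g)=\psi_2^{-1}\widehat\lb\psi_1^{-1}(fg)=\psi_2^{-1}\widehat\lb\psi_1^{-1}(\e)=\psi_2^{-1}\widehat\lb(e_x)=\psi_2^{-1}(e_y)=\eta$, where I used $\psi_1^{-1}(\e)=e_x$, $\widehat\lb(e_x)=e_{\lb(x)}=e_y$ and $\psi_2^{-1}(e_y)=\eta$. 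This exhibits the required $\vf$ as an automorphism of $I(X,F)$.

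For the ``only if'' direction, suppose such a $\vf$ exists and pass to $\Phi$ as above. By \cref{|(X-minus-x)^2_le|>|(X-minus-y)^2_le|} the existence of $\Phi$ forces $|(X\setminus\{x\})^2_\le|\le|(X\setminus\{y\})^2_\le|$, so the entire chain of lemmas applies to $\Phi$. In particular \cref{lambda-auto} produces an automorphism $\lb$ of $X$ with $\lb(x)=y$, which is exactly the required automorphism; this settles the equivalence.

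It remains to recover the form of $\vf$, and this bookkeeping is the only delicate point. I would retrace the normalizations applied to $\Phi$ in order: conjugation by $\beta$ (replacing $\Phi$ by $\inn_\beta\circ\Phi$, legitimate because $\beta e_y\beta^{-1}=e_y$), composition with $(\widehat\lb)^{-1}$, and finally a possible change of sign. \cref{vf-mult-auto} shows that the fully normalized map is a multiplicative automorphism $M_\sg$. Unwinding the three reductions gives $\Phi=\pm\,\inn_{\beta}^{-1}\circ\widehat\lb\circ M_\sg$, a composition of an inner automorphism, the induced automorphism $\widehat\lb$, and a multiplicative automorphism, possibly negated; hence $\Phi$ is either an automorphism of $I(X,F)$ or the negative of one. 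Since $\vf=\psi_2^{-1}\circ\Phi\circ\psi_1^{-1}$ with $\psi_1,\psi_2$ inner, the same conclusion transfers to $\vf$. The main obstacle is thus purely organizational: keeping careful track of how each reduction step modifies the map, so that the composite of the inverse steps correctly expresses the original $\vf$ as plus or minus a genuine automorphism.
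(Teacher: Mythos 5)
Your proposal is correct and follows essentially the same route as the paper: the reduction via inner automorphisms to the normalized condition \cref{fg=e_x=>vf(f)vf(g)=e_y}, the chain \cref{lem conjugate T,lambda-auto,vf-mult-auto} yielding $\lb\in\Aut(X)$ with $\lb(x)=y$ and the form $\pm\inn_{\bt}^{-1}\circ\wht\lb\circ M_\sg$ up to the outer inner automorphisms, and the converse via $\psi_2^{-1}\circ\wht\lb\circ\psi_1^{-1}$. You merely make explicit the unwinding of the normalizations that the paper compresses into ``up to an inner automorphism,'' and your bookkeeping is accurate.
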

	\begin{proof}
		Assume that $\vf:I(X,F)\to I(X,F)$ preserves products equal to $\e$ and $\eta$. By \cref{lem conjugate T,lambda-auto,vf-mult-auto}, up to an inner automorphism, $\vf$ is of the form $\pm\wht\lb\circ M_\sg$, where $\lb$ is an automorphism of $X$ with $\lb(x)=y$ and $M_\sg$ is a multiplicative automorphism of $I(X,F)$.
		
		Conversely, if $\lb$ is an automorphism of $X$ with $\lb(x)=y$, then $\wht\lb\in\Aut(I(X,F))$ preserves products equal to $e_x$ and $e_y$. Composing $\wht\lb$ with appropriate inner automorphisms of $I(X,F)$, we get $\vf\in\Aut(I(X,F))$ which preserves products equal to $\e$ and $\eta$.
	\end{proof}
	
	\begin{cor}\label{result-for-T_n(K)}
		Let $\e,\eta\in T_n(F)$ be primitive idempotents. There exists a bijective linear map $\vf:T_n(F)\to T_n(F)$ preserving products equal to $\e$ and $\eta$ if and only if $\e$ and $\eta$ have the same diagonal, in which case $\vf$ is either an automorphism or the negative of an automorphism of $T_n(F)$.
	\end{cor}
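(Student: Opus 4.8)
The plan is to recognize $T_n(F)$ as the incidence algebra $I(X,F)$ of the chain $X=\{1<2<\cdots<n\}$ and then apply \cref{main-result} directly. Indeed, as observed in the Introduction, $I(X,F)$ embeds in $T_n(F)$ with equality precisely when $X$ is totally ordered; for the chain above one obtains $I(X,F)=T_n(F)$. Since a chain is connected, the hypotheses of \cref{main-result} are met.

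Next I would translate the two relevant notions into the language of the main theorem. On the one hand, the primitive idempotents of $I(X,F)$ correspond to rank-one idempotent matrices (as recalled in the Introduction), so a primitive idempotent $\e$ has a single nonzero diagonal entry, i.e. $\e_D=e_x$ for a unique $x\in X$, and likewise $\eta_D=e_y$; hence the condition ``$\e$ and $\eta$ have the same diagonal'' is exactly the statement $x=y$. On the other hand, I would note that a finite chain admits only the trivial order automorphism: an order-preserving bijection of $\{1<\cdots<n\}$ must send the $k$-th smallest element to the $k$-th smallest element for every $k$, hence equals the identity. Consequently, an automorphism of $X$ carrying $x$ to $y$ exists if and only if $x=y$.

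Combining these two observations with \cref{main-result} finishes the argument: a bijective linear map $\vf$ preserving products equal to $\e$ and $\eta$ exists if and only if there is an automorphism of $X$ sending $x$ to $y$, which by the chain computation is equivalent to $x=y$, i.e. to $\e$ and $\eta$ having the same diagonal; and whenever it exists, \cref{main-result} already asserts that $\vf$ is an automorphism of $T_n(F)$ or the negative of one. The only genuinely new input beyond \cref{main-result} is the triviality of $\Aut(X)$ for a chain, and this is precisely the step that converts the abstract automorphism-existence criterion into the concrete ``same diagonal'' condition. I do not expect any substantive obstacle here, since all the real work resides in the main theorem.
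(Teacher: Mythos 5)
Your proposal is correct and follows essentially the same route as the paper: identify $T_n(F)$ with $I(X,F)$ for a connected chain $X$, note that primitive idempotents have diagonals $e_x$ and $e_y$ so that ``same diagonal'' means $x=y$, observe that $\Aut(X)=\{\id\}$ for a chain, and invoke \cref{main-result}. The only difference is that you spell out the triviality of $\Aut(X)$ and the diagonal translation in more detail than the paper does, which is harmless.
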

	\begin{proof}
		For, $T_n(F)\cong I(X,F)$, where $X$ is a chain of length $n-1$. Then, $\e$ and $\eta$, seen as elements of $I(X,F)$, have diagonals $e_x$ and $e_y$ for some $x,y\in X$. Since $\Aut(X)=\{\id\}$, the result follows.
	\end{proof}

	\section*{Acknowledgements}
	The first author was partially supported by Junta de Andalucía (grants FQM375 and PY20\texttt{\char`_}00255).
	The second author was partially supported by CNPq (process 404649/2018-1). We thank the referee for pointing out inaccuracies throughout the text and suggestions that improved its clarity.

	\bibliography{bibl}{}

\begin{thebibliography}{10}

\bibitem{Artin57}
{\sc Artin, E.}
\newblock {\em {Geometric algebra}}, vol.~3 of {\em Intersci. Tracts Pure Appl.
  Math.}
\newblock Interscience Publishers, New York, NY, 1957.

\bibitem{Baclawski72}
{\sc Baclawski, K.}
\newblock {Automorphisms and derivations of incidence algebras}.
\newblock {\em Proc. Amer. Math. Soc. 36}, 2 (1972), 351--356.

\bibitem{Burgos-Cabello-Peralta19}
{\sc Burgos, M.~J., S{\'a}nchez, J.~C., and Peralta, A.~M.}
\newblock {Linear maps between {C}$^*$-algebras that are $^*$-homomorphisms at
  a fixed point}.
\newblock {\em Quaest. Math. 42}, 2 (2019), 151--164.

\bibitem{Catalano18}
{\sc Catalano, L.}
\newblock {On maps characterized by action on equal products}.
\newblock {\em J. Algebra 511\/} (2018), 148--154.

\bibitem{Catalano21}
{\sc Catalano, L.}
\newblock {On maps preserving products equal to a rank-one idempotent}.
\newblock {\em Linear Multilinear Algebra 69}, 4 (2021), 673--680.

\bibitem{Catalano-Chang-Lee21}
{\sc Catalano, L., and Chang-Lee, M.}
\newblock {On maps preserving rank-one nilpotents}.
\newblock {\em Linear Multilinear Algebra 69}, 16 (2021), 3092--3098.

\bibitem{Catalano-Hsu-Kapalko19}
{\sc Catalano, L., Hsu, S., and Kapalko, R.}
\newblock {On maps preserving products of matrices}.
\newblock {\em Linear Algebra Appl. 563\/} (2019), 193--206.

\bibitem{Catalano-Julius21a}
{\sc Catalano, L., and Julius, H.}
\newblock {On maps preserving products equal to a diagonalizable matrix}.
\newblock {\em Commun. Algebra 49}, 10 (2021), 4334--4344.

\bibitem{Catalano-Julius21b}
{\sc Catalano, L., and Julius, H.}
\newblock {On maps preserving products equal to fixed elements}.
\newblock {\em J. Algebra 575\/} (2021), 220--232.

\bibitem{Chebotar-Ke-Lee-Shiao05}
{\sc Chebotar, M.~A., Ke, W.-F., Lee, P.-H., and Shiao, L.-S.}
\newblock {On maps preserving products}.
\newblock {\em Can. Math. Bull. 48}, 3 (2005), 355--369.

\bibitem{Chebotar-Ke-Lee-Wong03}
{\sc Chebotar, M.~A., Ke, W.-F., Lee, P.-H., and Wong, N.-C.}
\newblock {Mappings preserving zero products}.
\newblock {\em Stud. Math. 155}, 1 (2003), 77--94.

\bibitem{Drozd-Kolesnik07}
{\sc Drozd, Y., and Kolesnik, P.}
\newblock {Automorphisms of incidence algebras}.
\newblock {\em Comm. Algebra 35}, 12 (2007), 3851--3854.

\bibitem{GK}
{\sc Garc{\'e}s, J.~J., and Khrypchenko, M.}
\newblock {Potent preservers of incidence algebras}.
\newblock {\em Linear Algebra Appl. 635\/} (2022), 171--200.

\bibitem{Ginsburg-Julius-Velasquez20}
{\sc Ginsburg, V., Julius, H., and Velasquez, R.}
\newblock {On maps preserving {Lie} products equal to a rank-one nilpotent}.
\newblock {\em Linear Algebra Appl. 593\/} (2020), 212--227.

\bibitem{Julius21}
{\sc Julius, H.}
\newblock {On maps preserving {Lie} products equal to {{\(e_{11} - e_{22}\)}}}.
\newblock {\em Linear Multilinear Algebra 69}, 14 (2021), 2620--2628.

\bibitem{Kh-aut}
{\sc Khripchenko, N.~S.}
\newblock {Automorphisms of finitary incidence rings}.
\newblock {\em Algebra and Discrete Math. 9}, 2 (2010), 78--97.

\bibitem{Khripchenko-Novikov09}
{\sc Khripchenko, N.~S., and Novikov, B.~V.}
\newblock {Finitary incidence algebras}.
\newblock {\em Comm. Algebra 37}, 5 (2009), 1670--1676.

\bibitem{SpDo}
{\sc {Spiegel}, E., and {O'Donnell}, C.~J.}
\newblock {\em {Incidence Algebras}}.
\newblock New York, NY: Marcel Dekker, 1997.

\bibitem{St}
{\sc {Stanley}, R.}
\newblock {Structure of incidence algebras and their automorphism groups}.
\newblock {\em {Bull. Am. Math. Soc.} 76\/} (1970), 1236--1239.

\end{thebibliography}
	\bibliographystyle{acm}

\end{document}